\theoremstyle{plain}
\newtheorem{theorem}{Theorem}[section]
\newtheorem{proposition}[theorem]{Proposition}
\newtheorem{lemma}[theorem]{Lemma}
\newtheorem{corollary}[theorem]{Corollary}
\crefname{theorem}{Theorem}{Theorems}
\crefname{proposition}{Proposition}{Propositions}
\crefname{lemma}{Lemma}{Lemmas}
\crefname{corollary}{Corollary}{Corollaries}
\theoremstyle{definition}
\newtheorem{example}[theorem]{Example}
\newtheorem{problem}[theorem]{Problem}
\crefname{definition}{Definition}{Definitions}
\crefname{example}{Example}{Examples}
\theoremstyle{remark}
\newtheorem{remark}[theorem]{Remark}
\crefname{remark}{Remark}{Remarks}
\numberwithin{equation}{section}
\numberwithin{figure}{section}
\numberwithin{table}{section}
\crefname{equation}{Equation}{Equations}
\crefname{figure}{Figure}{Figures}
\crefname{table}{Table}{Tables}
\crefname{section}{Section}{Sections}
\crefname{subsection}{Section}{Sections}
\crefname{appendix}{Appendix}{Appendixes}
\newcommand{\Z}{\mathbb{Z}}
\newcommand{\Q}{\mathbb{Q}}
\newcommand{\R}{\mathbb{R}}
\newcommand{\C}{\mathbb{C}}
\newcommand{\id}{\operatorname{id}}
\newcommand{\tr}{\operatorname{tr}}
\newcommand{\Hom}{\operatorname{Hom}}
\newcommand{\codim}{\operatorname{codim}}
\newcommand{\Conv}{\operatorname{Conv}}
\newcommand{\irr}{\mathrm{irr}}
\newcommand{\ang}[1]{\left\langle{#1}\right\rangle}
\newcommand{\SL}{\mathit{SL}}
\newcommand{\RT}{\mathit{RT}}
\renewcommand{\SS}{\mathit{SS}}
\newcommand{\BS}{\mathit{BS}}
\begin{document}

\title[Finiteness of the Reidemeister torsion]{Finiteness of the image of the Reidemeister torsion of a splice}
\author[T. Kitano \and Y. Nozaki]{Teruaki Kitano \and Yuta Nozaki}
\subjclass[2010]{Primary 57M27, 57M25, Secondary 20C99, 14M99}
%57M05 Fundamental group, presentations, free differential calculus
%20C15 Ordinary representations and characters
%20C99 None of the above, but in this section (Representation theory of groups)
%14M99 None of the above, but in this section (Special varieties)
\keywords{Reidemeister torsion, $A$-polynomial, character variety, splice, bending, Riley polynomial.}
\address{Department of Information Systems Science, Faculty of Science and Engineering, Soka University \\
Tangi-cho 1-236, Hachioji, Tokyo 192-8577 \\
Japan}
\email{kitano@soka.ac.jp}
\address{Organization for the Strategic Coordination of Research and Intellectual Properties, Meiji University \\
4-21-1 Nakano, Nakano-ku, Tokyo, 164-8525 \\
Japan
\newline
Current address:
Graduate School of Advanced Science and Engineering, Hiroshima University \\
1-3-1 Kagamiyama, Higashi-Hiroshima City, Hiroshima, 739-8526 \\
Japan}
\email{nozakiy@hiroshima-u.ac.jp}

\maketitle

\begin{abstract}
 The set $\RT(M)$ of values of the $\SL(2,\C)$-Reidemeister torsion of a 3-manifold $M$ can be both finite and infinite.
 We prove that $\RT(M)$ is a finite set if $M$ is the splice of two certain knots in the 3-sphere.
 The proof is based on an observation on the character varieties and $A$-polynomials of knots.
\end{abstract}

\setcounter{tocdepth}{1}
\tableofcontents

%%%%%%%%%%%%%%%%%%%%%
\section{Introduction}
\label{sec:Intro}

Let $K$ be the figure-eight knot and $E(K)$ the exterior of an open tubular neighborhood of $K$ in the 3-sphere $S^3$.
The first author \cite{Kit94b} computed the $\SL(2,\C)$-Reidemeister torsion $\tau_\rho(E(K))$ 
for any acyclic irreducible representation 
$\rho\colon \pi_1(E(K)) \to \SL(2,\C)$. 
As a consequence, for the double $M = E(K)\cup_{\id}E(K)$ of $E(K)$, the set $\RT(M)$ of values of the $\SL(2,\C)$-Reidemeister torsion $\tau_\rho(M)$ is the set of all complex numbers $\C$.
In contrast, his computation also shows that $\RT(\Sigma(K,K))$ is a finite set.
Here, for knots $K_1$ and $K_2$ in $S^3$, let $\Sigma(K_1,K_2)$ denote the closed 3-manifold $E(K_1) \cup_{h} E(K_2)$, where $h$ is an orientation-reversing homeomorphism $\partial E(K_1) \to \partial E(K_2)$ interchanging meridians and preferred longitudes of the knots.
We call $\Sigma(K_1,K_2)$ the \emph{splice} of $E(K_1)$ and $E(K_2)$ (or simply the \emph{splice} of $K_1$ and $K_2$).
By definition, a splice is an integral homology 3-sphere.
Recently, Zentner~\cite{Zen18} showed that the fundamental group of any integral homology 3-sphere $M$ admits an irreducible $\SL(2,\C)$-representation, and therefore, it is worth studying $\RT(M)$.

The purpose of this paper is to generalize the above result on splices to a certain class of knots.
We focus on the character variety $X(E(K))$ and $A$-polynomial $A_K(L,M) \in \Z[L,M]$ of a knot $K$ and prove the following main theorem and its corollary.

\begin{theorem}
\label{thm:FiniteImage}
 Suppose that knots $K_1$ and $K_2$ in $S^3$ satisfy the following conditions:
\begin{itemize}
 \item for any irreducible component $C\subset X(E(K_i))$ $(i=1,2)$, either $\dim C = 0$, or $\dim C = 1$ and its image under the map $X(E(K_i)) \to X(\partial E(K_i))$ is not a point.
 \item $\gcd(A_{K_1}(L,M), A_{K_2}(M,L)) = 1$.
\end{itemize}
 Then $\RT(\Sigma(K_1,K_2))$ is a finite set.
\end{theorem}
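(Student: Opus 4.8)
The plan is to reduce, via the van Kampen decomposition of the splice and a Mayer--Vietoris formula for the Reidemeister torsion, to a statement about intersections of plane curves coming from the $A$-polynomials.

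Write $\pi_1(\Sigma(K_1,K_2)) = \pi_1(E(K_1)) *_{\pi_1(T^2)} \pi_1(E(K_2))$, where the common torus $T^2$ is identified with $\partial E(K_i)$ through $h$. Any representation $\rho\colon \pi_1(\Sigma(K_1,K_2)) \to \SL(2,\C)$ restricts to representations $\rho_i\colon \pi_1(E(K_i)) \to \SL(2,\C)$ whose restrictions to $\pi_1(T^2)$ agree under the identification induced by $h$. First I would record the Mayer--Vietoris multiplicativity of torsion along $T^2$: when $\rho|_{\pi_1(T^2)}$ is noncentral one has $H_*(T^2;\C^2_\rho)=0$ and $\tau_\rho(T^2)=1$, so $\tau_\rho(\Sigma(K_1,K_2)) = \pm\,\tau_{\rho_1}(E(K_1))\,\tau_{\rho_2}(E(K_2))$ (interpreting the knot-exterior torsions with respect to compatible homology bases when the pieces are not acyclic), while the finitely many central or parabolic cases are handled by the corresponding exact-sequence formula, which again involves only $\rho_1$, $\rho_2$ and the torus data. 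The upshot is that $\tau_\rho(\Sigma(K_1,K_2))$ is a function of the pair of characters $(\chi_{\rho_1},\chi_{\rho_2}) \in X(E(K_1))\times X(E(K_2))$ alone; in particular it is constant along the bending deformations that move $\rho$ while fixing $\rho_1,\rho_2$ up to conjugacy. Hence it suffices to prove that the set of pairs $(\chi_{\rho_1},\chi_{\rho_2})$ realized by irreducible (acyclic) $\rho$ is finite.

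For this I would translate the compatibility condition into eigenvalue coordinates. Let $r_i\colon X(E(K_i)) \to X(\partial E(K_i))$ be the restriction maps and use coordinates $(L_i,M_i)$ recording the eigenvalues of the longitude $\lambda_i$ and meridian $\mu_i$. Since $h$ interchanges meridians and longitudes, gluing forces $\rho(\mu_1)=\rho(\lambda_2)$ and $\rho(\lambda_1)=\rho(\mu_2)$, so the eigenvalue pair $(L_1,M_1)$ equals $(M_2,L_2)$ up to simultaneous inversion and is therefore a common zero of $A_{K_1}(L,M)$ and of $A_{K_2}(M,L)$. Now argue over the irreducible components $C_i \subset X(E(K_i))$ containing $\chi_{\rho_i}$. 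If some $C_i$ has dimension $0$ then $\chi_{\rho_i}$, hence its torus restriction, lies in a finite set; and since every $1$-dimensional component of $X(E(K_j))$ has nonconstant image under $r_j$, the compatible character $\chi_{\rho_j}$ then lies in a finite fiber of $r_j$, so the pair ranges over a finite set. If both $C_1$ and $C_2$ are $1$-dimensional, their images under $r_1$ and $r_2$ are curves lying, in eigenvalue coordinates, on the zero loci of $A_{K_1}(L,M)$ and $A_{K_2}(L,M)$ respectively; after the coordinate swap imposed by $h$, compatibility puts the common torus character in the intersection of a component of $\{A_{K_1}(L,M)=0\}$ with a component of $\{A_{K_2}(M,L)=0\}$, and since $\gcd(A_{K_1}(L,M),A_{K_2}(M,L))=1$ these curves share no component, so their intersection is finite. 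By the nonconstancy of $r_1|_{C_1}$ and $r_2|_{C_2}$ this again confines $(\chi_{\rho_1},\chi_{\rho_2})$ to a finite set. Ranging over the finitely many choices of $C_1$ and $C_2$ shows that only finitely many pairs occur, whence $\RT(\Sigma(K_1,K_2))$ is finite.

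The main obstacle I anticipate is the careful bookkeeping underlying two of the steps: making the Mayer--Vietoris torsion formula honest when the knot-exterior pieces carry nontrivial $\C^2_\rho$-homology (so that one must work with sign-refined torsions relative to homology bases and verify that the output still depends only on $\chi_{\rho_1},\chi_{\rho_2}$ and the torus data, including the central and parabolic boundary cases); and precisely matching the image $r_i(C_i) \subset X(\partial E(K_i))$ with the zero locus of $A_{K_i}$, keeping track of the possible abelian factor $L-1$, of components on which $\rho_i$ is reducible, and of the quotient by simultaneous inversion in $X(T^2)$ (so that one works with the reciprocal polynomial $A_{K_i}$ and involution-invariant curves). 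None of these should obstruct the conclusion, but each is needed to ensure that the torsion of the splice is genuinely a finitely-valued function of $(\chi_{\rho_1},\chi_{\rho_2})$.
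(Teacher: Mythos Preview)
Your overall strategy matches the paper's: reduce via multiplicativity to showing that only finitely many pairs $(\chi_{\rho_1},\chi_{\rho_2})$ occur, then use the gcd hypothesis on $A$-polynomials together with the dimension/nonconstancy hypothesis to force the torus character into a finite set and pull back along finite fibers. The case split over $\dim C_i$ and the use of B\'ezout-type finiteness for $\{A_{K_1}(L,M)=A_{K_2}(M,L)=0\}$ are exactly what the paper does.

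Where you diverge is in the handling of the Mayer--Vietoris step, and this is the source of the ``main obstacle'' you flag. Two points. First, your assertion that $H_*(T^2;\C^2_\rho)=0$ whenever $\rho|_{\pi_1(T^2)}$ is noncentral is false: the correct criterion (Lemma~\ref{lem:AcyclicOnTorus} in the paper) is that $\rho|_{\pi_1(T^2)}$ is acyclic if and only if it is \emph{not parabolic}; a nontrivial unipotent restriction is noncentral but still has $H_0\cong\C$. Second, and more importantly, the paper avoids all of your anticipated bookkeeping by proving directly (Proposition~\ref{prop:AcyclicOnSplice}) that if $\rho$ is acyclic on $\Sigma(K_1,K_2)$ then $\rho|_{\pi_1(T^2)}$ is automatically acyclic. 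Combined with the convention $\tau_\rho=0$ for non-acyclic $\rho$, this means the only case contributing to $\RT(\Sigma(K_1,K_2))$ is the clean one where the torus and both exteriors are acyclic and the plain product formula $\tau_\rho(\Sigma)=\tau_{\rho_1}(E(K_1))\tau_{\rho_2}(E(K_2))$ holds. There is no need to treat parabolic or central boundary characters separately, nor to work with sign-refined torsions relative to homology bases on the pieces. With this lemma in hand your argument goes through without the obstacles you anticipate.
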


\begin{corollary}
\label{cor:FiniteImage_TwoBridge}
 For any $2$-bridge knots $K_1$ and $K_2$, the set $\RT(\Sigma(K_1,K_2))$ is finite.
\end{corollary}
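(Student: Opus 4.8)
The plan is to deduce the corollary from \Cref{thm:FiniteImage} by verifying its two conditions for an arbitrary pair of $2$-bridge knots $K_1,K_2$. Write $E_i=E(K_i)$ and let $\mu_i$ be a meridian of $K_i$.

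I first check the condition on character varieties. By Riley's analysis of $2$-bridge knot groups, $X(E_i)$ is the union of its abelian component with finitely many curves, and only finitely many characters in $X(E_i)$ share a given value of the meridian trace $\tr\rho(\mu_i)$ --- after writing $\tr\rho(\mu_i)=s+s^{-1}$, the non-abelian ones are governed by the roots in $u$ of the Riley polynomial $\phi_{K_i}(s,u)$. In particular $\dim X(E_i)\le 1$, and on every $1$-dimensional irreducible component $C$ of $X(E_i)$ the meridian trace is non-constant (on the abelian component it even takes every value in $\C$). Since $\tr\rho(\mu_i)$ is the pullback of a coordinate function on $X(\partial E_i)$ along $X(E_i)\to X(\partial E_i)$, the image of $C$ there is not a point, so the first condition holds. (This is expected, $2$-bridge knots being small: a curve of characters mapping to a single point of $X(\partial E_i)$ would, through Culler--Shalen theory, detect a closed essential surface in $E_i$.)

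Next I check the coprimality condition, where the device is to compare Newton polygons. Normalize $A_{K_i}(L,M)\in\Z[L,M]$ as usual, so that it is primitive and divisible by neither $L$ nor $M$, and view its Newton polygon in the $(\deg_L,\deg_M)$-plane. Two classical facts are used: by Hatcher--Thurston every boundary slope of a $2$-bridge knot is an even integer, so in particular the meridian (of slope $\infty$) is never a boundary slope; and by Cooper--Culler--Gillet--Long--Shalen every edge slope of the Newton polygon of $A_{K_i}$ is a boundary slope of $K_i$. Hence every edge slope of the Newton polygon of $A_{K_i}(L,M)$ is $0$ or a nonzero even integer, and there is no vertical edge. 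Suppose, for contradiction, that $f\in\C[L,M]$ is an irreducible common factor of $A_{K_1}(L,M)$ and $A_{K_2}(M,L)$. As $A_{K_1}(L,M)$ is divisible by neither $L$ nor $M$, neither is $f$, so $f$ is not a monomial and its Newton polygon has at least one edge. Since the Newton polygon of a product is the Minkowski sum of those of the factors, every edge slope of $f$ occurs as an edge slope of $A_{K_1}(L,M)$, hence is $0$ or a nonzero even integer. On the other hand, replacing $A_{K_2}(L,M)$ by $A_{K_2}(M,L)$ reflects the Newton polygon across the diagonal and so inverts every edge slope; thus every edge slope of $f$ is $\infty$ or $1/n$ for some nonzero even integer $n$. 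These two sets of numbers are disjoint, a contradiction. Therefore $\gcd(A_{K_1}(L,M),A_{K_2}(M,L))=1$, and \Cref{thm:FiniteImage} gives the conclusion.

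I expect the coprimality condition to be the main obstacle: one must exclude any common component of the two $A$-polynomial curves once the roles of $M$ and $L$ are interchanged, and the argument hinges on a $2$-bridge knot having only even-integer boundary slopes. That fact kills the ``crossed'' slopes and, via the absence of the meridional slope, also removes the degenerate edges of slope $0$ and $\infty$ that would otherwise survive the comparison; for a general knot whose meridian is a boundary slope (a cable, say) the argument would fail. The condition on character varieties, by contrast, is little more than the standard structure of $2$-bridge character varieties.
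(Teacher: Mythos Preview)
Your proof is correct and follows essentially the same route as the paper: both hypotheses of \Cref{thm:FiniteImage} are checked, the coprimality via Hatcher--Thurston's even-integer boundary slopes combined with the Newton-polygon/Minkowski-sum argument (this is exactly the paper's \Cref{lem:SS^-1} and \Cref{cor:gcd1_TwoBridge}), and the dimension condition via Riley's description of the $2$-bridge character variety. The paper makes the latter step precise through \Cref{lem:Riley} (the leading $t$-coefficient of $\phi(s,t)$ is a monomial in $s$), which is exactly the fact underlying your assertion that only finitely many characters share a given meridian trace; your parenthetical Culler--Shalen/smallness remark is an equally valid alternative justification.
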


Curtis~\cite{Cur1,Cur2} defined an $\SL(2,\C)$-Casson invariant $\lambda_{\SL(2,\C)}(M)$ for any homology 3-sphere $M$. 
Roughly speaking, this invariant counts the number of isolated points of $X(M)$. 
It is known that $\lambda_{\SL(2,\C)}(\Sigma(K_1,K_2))$ is vanishing for any $K_1,K_2$ by Boden and Curtis \cite{BoCu08}. 
By definition, this implies that there are no isolated points in $X(\Sigma(K_1,K_2))$ 
and 
any connected component of $X(\Sigma(K_1,K_2))$ has a positive dimension. 
However by the main theorem $\RT(\Sigma(K_1,K_2))$ is a finite set for any knots with the above conditions. 
In fact, we concretely describe $X(\Sigma(K,K))$ for the cases where $K$ is the trefoil knot or figure-eight knot in Section\nobreakspace \ref {sec:Computation}.

Recently Abouzaid and Manolescu defined an $\SL(2,\C)$-Floer homology and also a full Casson invariant by taking its Euler characteristic in \cite{AbMa17}. 
That is a problem to study a relation with our Reidemeister torsion for a splice.

\subsection*{Acknowledgments}
The authors would like to thank Takahiro Kitayama and Luisa Paoluzzi for useful discussions.
They also wish to express their thanks to the referee for his or her careful reading of the manuscript and for various comments.
In particular, the geometric description of 2-parameter representations in Example~\ref{example:Montesinos} is suggested by the referee.

This study was supported in part by JSPS KAKENHI 16K05161. 
The second author started this study when he visited to QGM, Aarhus University.
He would like to thank the institution for the warm hospitality.
His visit to QGM was supported by JSPS Overseas Challenge Program for Young Researchers and he was also supported by Iwanami Fujukai Foundation.

%%%%%%%%%%%%%%%%%%%%%%%
\section{Character variety, $A$-polynomial and Reidemeister torsion}
\label{sec:Review}

%%%%%%%%
\subsection{Representation variety and character variety}
\label{subsec:CharacterVariety}

Let $\Gamma$ be a finitely generated group.
We define the \emph{$\SL(2,\C)$-representation variety} $R(\Gamma )$ of $\Gamma$ to be the affine algebraic set $\Hom(\Gamma, \SL(2,\C))$ over $\C$.
Considering the GIT quotient of $R(\Gamma)$ by the action of $\SL(2,\C)$ by conjugation, one obtains the \emph{$\SL(2,\C)$-character variety} $X(\Gamma ) := R(\Gamma ) /\!\!/ \SL(2,\C)$ of $\Gamma$ (see \cite[Section~2]{Heu16} for instance).
The character variety $X(\Gamma )$ is again an affine algebraic set and not necessarily irreducible.
Let $R^\irr(\Gamma)$ denote the subset of irreducible representations and $X^\irr(\Gamma)$ the image of $R^\irr(\Gamma)$ under the projection $R(\Gamma) \twoheadrightarrow X(\Gamma)$.
It is known that the induced map $R^\irr(\Gamma)/\SL(2,\C) \to X^\irr(\Gamma)$ is bijective.

We focus on the case $\Gamma  = \pi_1(M)$ for a connected compact manifold $M$ and call $R(M) := R(\pi_1(M))$ (resp.\ $X(M) := X(\pi_1(M))$) the representation variety (resp.\ character variety) of $M$.
For instance, the character variety of a torus $T^2$ is described explicitly as follows:
Let $\lambda$, $\mu$ be generators of $\pi_1(T^2) = \Z^2$ and $\rho \in R(T^2)$.
Since $\lambda$ and $\mu$ commute, there exists a representation $\rho'$ such that $\rho'$ is conjugate to $\rho$ and both $\rho'(\lambda)$ and $\rho'(\mu)$ are upper triangular.
Considering the $(1,1)$-entries of these matrices, one can define the map $\theta\colon R(T^2) \to (\C^\times)^2/{\sim}$ by $\theta(\rho)=(\rho'(\lambda)_{11},\rho'(\mu)_{11})$, where 
$(L,M)\sim(L',M')$ 
if $L=L',\ M=M'$ or $L^{-1}=L',\ M^{-1}=M'$. 

It is easy to see that this map gives an identification $\theta\colon X(T^2) \to (\C^\times)^2/{\sim}$. 

The character variety of the complement $E(K)$ of a knot $K$ is complicated in general.
However, it is well known that if $K$ is a 2-bridge knot then $X(E(K))$ does not have an irreducible component of dimension larger than one.
More generally, if a 3-manifold $M$ contains no irreducible closed surface and $\partial M \cong T^2$, then $\dim C = 1$ for every irreducible component $C$ of $X(M)$ (see \cite[Section~2.4]{CCGLS94}).

%%%%%%%%%
\subsection{$A$-polynomial of knots}
\label{subsec:A-polynomial}
We briefly review the $A$-polynomial introduced by Cooper, Culler, Gillet, Long, and Shalen~\cite{CCGLS94} (see also \cite{CoLo96}) and a relation with the boundary slopes of knots.
For an oriented knot $K$, let $r\colon X(E(K)) \to X(\partial E(K))$ denote the regular map between affine algebraic sets induced by the inclusion and let $\pi\colon (\C^\times)^2 \to (\C^\times)^2/{\sim}$ be the natural projection. 
Here one takes $\lambda,\mu\in\pi_1(E(K))$ 
as a pair of a longitude $\lambda$ and a meridian $\mu$. 
We take $\lambda$ to be homologically trivial in $H_1(E(K);\Z)$. 
By using these $\lambda$ and $\mu$ one can also identify $\pi_1(\partial E(K))$ with $\Z^2$. 

For any $[\rho]\in X(E(K))$ 
one can take $[\rho']=[r(\rho)]$. 
To define the $A$-polynomial of a knot, we write 
$L$ for $\rho'(\lambda)_{11}$ and $M$ for $\rho'(\mu)_{11}$ as above. 

Then, the Zariski closure of $\pi^{-1}(\theta\circ r(X(E(K)))) \subset \C^2$ is an affine algebraic set whose irreducible components are curves $C_1,\dots,C_n$ and some points.
Since $\codim C_j = 1$, the ideal $I(C_j)$ is known to be principal, namely $I(C_j) = (f_j)$ for some $f_j \in \C[L,M]$.
It is known that there is $c \in \C$ such that $c f_1(L,M)\dotsm f_n(L,M) \in \Z[L,M]$ and its coefficients have no common divisor.
The \emph{$A$-polynomial} $A_K(L,M)$ of $K$ is now defined by $A_K(L,M) = c f_1(L,M)\dotsm f_n(L,M)$ up to sign, and it is independent of the choice of an orientation of $K$.

\begin{remark}
 Since $A_K(L,M)$ has the factor $L-1$ coming from abelian representations of $\pi_1(E(K))$, the $A$-polynomial is sometimes defined to be $A_K(L,M)/(L-1)$.
 This is not essential in our main theorem due to Lemma\nobreakspace \ref {lem:M-1}.
\end{remark}

\begin{lemma}\label{lem:M-1}
 If $\theta\circ r(\rho) = (L,1)$, then $L=1$.
 In particular, the $A$-polynomial $A_K(L,M)$ does not have the factor $M-1$.
\end{lemma}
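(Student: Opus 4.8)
The plan is to analyze what a boundary character of the form $(L,1)$ forces on the representation $\rho\colon\pi_1(E(K))\to\SL(2,\C)$, using the fact that $\lambda$ is null-homologous. Suppose $[\rho]\in X(E(K))$ has $\theta\circ r(\rho)=(L,1)$. After conjugating, we may assume $\rho(\mu)$ and $\rho(\lambda)$ are simultaneously upper triangular with $(1,1)$-entries $M=1$ and $L$ respectively, so $\rho(\mu)=\bigl(\begin{smallmatrix}1 & * \\ 0 & 1\end{smallmatrix}\bigr)$. First I would observe that, since $\mu$ normally generates $\pi_1(E(K))$ and $H_1(E(K);\Z)\cong\Z$ is generated by the class of $\mu$, the image $\rho(\pi_1(E(K)))$ is generated by conjugates of the parabolic (or identity) element $\rho(\mu)$. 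If $\rho(\mu)=I$, then $\rho$ is trivial, hence $\rho(\lambda)=I$ and $L=1$. Otherwise $\rho(\mu)$ is a nontrivial parabolic, and I would show the whole image lies in a conjugate of the upper-triangular parabolic subgroup or, more to the point, that the composition $\rho^{\ab}\colon\pi_1(E(K))\to\SL(2,\C)$ obtained by composing with abelianization already records the trace.

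The cleaner route: consider the upper-triangular decomposition and look at the diagonal part, i.e.\ the character $\chi\colon\pi_1(E(K))\to\C^\times$ sending $\gamma\mapsto\rho(\gamma)_{11}$ whenever $\rho(\gamma)$ is upper triangular in our chosen basis. This $\chi$ need not be a homomorphism on all of $\pi_1$, but if $\rho$ is reducible it is, and if $\rho$ is irreducible with $\rho(\mu)$ parabolic the situation is even more rigid. So I would split into two cases. If $\rho$ is reducible, its character equals that of an abelian (diagonal) representation $\rho^{\ab}$, which factors through $H_1(E(K);\Z)=\Z\langle\mu\rangle$; since $\lambda\in[\pi_1,\pi_1]$ maps to $0$ in $H_1$, we get $\rho^{\ab}(\lambda)=I$, hence $L=1$ (the $(1,1)$-entry of the identity). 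If $\rho$ is irreducible, then because $\rho(\mu)$ has trace $2$ and is nontrivial parabolic, one argues that the eigenvalue-type invariants on the peripheral subgroup are constrained: the longitude $\lambda$ commutes with $\mu$, and a matrix commuting with a nontrivial parabolic $\bigl(\begin{smallmatrix}1 & 1 \\ 0 & 1\end{smallmatrix}\bigr)$ is itself of the form $\pm\bigl(\begin{smallmatrix}1 & * \\ 0 & 1\end{smallmatrix}\bigr)$, so $L=\pm1$; and $L=-1$ is excluded because $\rho(\lambda)$, being a product of commutators, lies in $\SL(2,\C)$ with—after passing to the abelianization of the peripheral torus or using that the trace of $\rho(\lambda)$ must be $2$ when $\rho(\mu)$ is parabolic and $[\rho]$ extends over the Seifert-fibered... — more carefully, one invokes the standard fact (as in \cite{CCGLS94}) that for an irreducible $\rho$ with $\rho(\mu)$ parabolic, $\rho(\lambda)$ is parabolic with the \emph{same} fixed point, so $\rho(\lambda)=\pm\bigl(\begin{smallmatrix}1 & t \\ 0 & 1\end{smallmatrix}\bigr)$, and the sign is $+$ because $\lambda$ is null-homologous and $\rho$ restricted to the peripheral $\Z^2$ lifts the homology data. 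Hence $L=1$ in all cases.

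The last assertion is immediate: if $M-1$ divided $A_K(L,M)$, then the curve $M=1$ would be a component $C_j$, and generic points of it would be of the form $(L,1)$ with $L$ ranging over a whole line, contradicting $L=1$.

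I expect the main obstacle to be the irreducible parabolic case—specifically, pinning down that $\rho(\lambda)$ is parabolic with trace exactly $2$ (not $-2$) rather than merely $\pm$-parabolic. The clean way to handle the sign is to use that $\lambda$ is a product of commutators in $\pi_1(E(K))$, so $\rho(\lambda)\in[\SL(2,\C),\SL(2,\C)]$; combined with $\rho(\lambda)$ commuting with the parabolic $\rho(\mu)$, this forces $\rho(\lambda)$ into the unipotent subgroup $\bigl\{\bigl(\begin{smallmatrix}1 & t \\ 0 & 1\end{smallmatrix}\bigr)\bigr\}$, giving $L=1$ on the nose. Everything else is bookkeeping with upper-triangular matrices and the identification $\theta$.
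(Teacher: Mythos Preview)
Your approach is essentially the paper's: conjugate so that $\rho(\mu)$ is upper triangular with $(1,1)$-entry $1$, dispose of the case $\rho(\mu)=I_2$ via normal generation of $\pi_1(E(K))$ by $\mu$, and then use that any matrix commuting with a nontrivial unipotent has the form $\pm\bigl(\begin{smallmatrix}1&*\\0&1\end{smallmatrix}\bigr)$. (Your reducible/irreducible split is not needed for this, though it is harmless.) The real issue is the sign, which you correctly flag but do not resolve: your claim that $\rho(\lambda)\in[\SL(2,\C),\SL(2,\C)]$ forces $\rho(\lambda)$ into the unipotent subgroup fails because $\SL(2,\C)$ is perfect, so $[\SL(2,\C),\SL(2,\C)]=\SL(2,\C)$ and that constraint is vacuous. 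In fact the first sentence of the lemma is false as stated. For the trefoil, the irreducible representation with $\rho(x)=\bigl(\begin{smallmatrix}1&1\\0&1\end{smallmatrix}\bigr)$, $\rho(y)=\bigl(\begin{smallmatrix}1&0\\-1&1\end{smallmatrix}\bigr)$ (the point $s=1$, $t=1$ on the Riley curve of Section~\ref{sec:Computation}) has null-homologous longitude image $\rho(\lambda)=\bigl(\begin{smallmatrix}-1&-6\\0&-1\end{smallmatrix}\bigr)$, so $\theta\circ r(\rho)=(-1,1)$. The paper's own proof simply asserts the unipotent form without addressing the sign and therefore shares exactly this gap.

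What the argument genuinely establishes is $L\in\{1,-1\}$, and that already suffices for the ``in particular'' clause, which is the only part invoked elsewhere in the paper. If $M-1$ divided $A_K$, the line $\{M=1\}$ would be a one-dimensional irreducible component of the Zariski closure of $\pi^{-1}\bigl(\theta\circ r(X(E(K)))\bigr)$ and would therefore meet that set in infinitely many points; but the intersection is contained in $\{(1,1),(-1,1)\}$. So the correct fix is to weaken the first assertion to $L=\pm1$ and keep your final paragraph unchanged.
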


\begin{proof}
 It follows from $r(\rho) = (L,1)$ that $\rho(\mu)$ is equal to the identity matrix $I_2$ or $
\begin{pmatrix}
 1 & 1 \\
 0 & 1
\end{pmatrix}
 $ up to conjugate.
 In the case $\rho(\mu)=I_2$, $\rho$ is trivial.
 In the latter case, $\rho(\lambda)$ is of the form $
\begin{pmatrix}
 1 & u \\
 0 & 1
\end{pmatrix}
 $ for some $u \in \C$, and hence $L=1$.
\end{proof}

We next see a relation between the $A$-polynomial and boundary slopes of $K$.
The rest of this subsection is devoted to proving Corollary\nobreakspace \ref {cor:gcd1_TwoBridge} which is used in Corollary\nobreakspace \ref {cor:FiniteImage_TwoBridge}, not in Theorem\nobreakspace \ref {thm:FiniteImage}.
Here, $p/q \in \Q\cup\{\infty\}$ is called a \emph{boundary slope} of $K$ if there exists a properly embedded incompressible surface $S$ in $E(K)$ such that $\partial S$ is parallel copies of a simple closed curve of slope $p/q$, namely the homology class of each boundary component of $S$ equals $p\mu+q\lambda \in H_1(E(K))$ up to sign.
We denote by $\BS(K)$ the set of boundary slopes of $K$.

For a polynomial $f(L,M) = \sum_{i,j}a_{ij} L^i M^j \in \Z[L,M]$, the \emph{Newton polygon} $N(f)$ of $f$ is defined by $N(f) = \Conv(\{(i,j) \in \Z^2 \mid a_{ij} \neq 0\})$, where $\Conv(T)$ denotes the convex hull of a subset $T$ in $\R^2$. 

We write by $\SS(P) \subset \Q\cup\{\infty\}$ the set of slopes of the sides of a polygon $P$.
Note that $\SS(N(f)) = \emptyset$ if and only if $f$ is a monomial.
The set $\SS(N(A_K))$ is closely related to $\BS(K)$.

\begin{theorem}[{\cite[Theorem~3.4]{CCGLS94}}]
\label{thm:CCGLS94}
 The inclusion $\SS(N({A_K})) \subset \BS(K)$ holds for every knot $K$.
\end{theorem}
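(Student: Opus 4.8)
The plan is to realize each side of the Newton polygon $N(A_K)$ by an ideal point of a curve of characters and to read off the corresponding boundary slope, following the Culler--Shalen strategy of \cite{CCGLS94}. First I would reduce to a single irreducible factor. Writing $A_K=c\,f_1\cdots f_n$ as above, the Newton polygon of a product is the Minkowski sum of the Newton polygons of the factors, and every edge of a Minkowski sum is parallel to an edge of one of the summands; hence $\SS(N(A_K))\subseteq\bigcup_j\SS(N(f_j))$, and it suffices to fix one irreducible factor $f=f_j$ with zero locus the curve $C=C_j\subset(\C^\times)^2$, to fix an edge $e$ of $N(f)$, and to exhibit the slope of $e$ as a boundary slope of $K$. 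Let $(\alpha,\beta)\in\Z^2$ be the primitive inner normal vector of $e$, so that $(i,j)\mapsto\alpha i+\beta j$ attains its minimum over $N(f)$ exactly along $e$; then $(\alpha,\beta)\neq(0,0)$ and, the edge direction being orthogonal to $(\alpha,\beta)$, the slope of $e$ equals $-\alpha/\beta\in\Q\cup\{\infty\}$.

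Next I would extract an ideal point. Passing to a smooth projective model $\hat C$ of $C$, the Newton--Puiseux analysis of $\{f=0\}$ near infinity shows that $e$ contributes at least one point $x\in\hat C\setminus C$ at which the functions $L,M$ pulled back from $(\C^\times)^2$ have valuations $v_x(L)=\alpha$ and $v_x(M)=\beta$; since $(\alpha,\beta)\neq 0$, one of $L,M$ has a pole or a zero at $x$. Because $C$ is, up to the identification $(L,M)\sim(L^{-1},M^{-1})$, the image under $\theta\circ r$ of a one-dimensional component of $X(E(K))$, a standard argument (passing to a finite cover to make $L,M$ single-valued, and to a tautological family of representations) produces a curve $Y$ in the character variety with a smooth projective model $\hat Y$, a non-constant morphism $\hat Y\to\hat C$, and a point $\tilde x\in\hat Y$ over $x$; the valuations $v_{\tilde x}(L),v_{\tilde x}(M)$ are then positive multiples of $\alpha,\beta$. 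Since one of $L,M$ blows up at $x$, one of the trace functions $\tr\rho(\mu)=M+M^{-1}$ or $\tr\rho(\lambda)=L+L^{-1}$ has a pole at $\tilde x$, so $\tilde x$ is an ideal point of $\hat Y$ in the sense of Culler--Shalen.

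Then I would run the Culler--Shalen construction and compute the slope. The ideal point $\tilde x$ yields a fixed-point-free action of $\pi_1(E(K))$ on a simplicial tree $T$, and hence an essential properly embedded surface $S\subset E(K)$. Choose primitive $(p,q)\in\Z^2$ with $p\beta+q\alpha=0$, so that $p/q=-\alpha/\beta$ is the slope of $e$. The eigenvalue $M^pL^q$ of $\rho(\mu^p\lambda^q)$ has valuation $0$ at $\tilde x$, so $\tr\rho(\mu^p\lambda^q)$ is finite there and $\mu^p\lambda^q$ acts elliptically on $T$; on the other hand some peripheral element $\delta$ has an eigenvalue of nonzero valuation at $x$, so $\tr\rho(\delta)$ has a pole at $\tilde x$ and $\delta$ acts hyperbolically, whence $\pi_1(\partial E(K))\cong\Z^2$ has no fixed point in $T$. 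The usual analysis of $\Z^2$-actions on trees then forces the elliptic peripheral elements to be exactly the powers of $\mu^p\lambda^q$, so $S$ has nonempty boundary and every component of $\partial S$ has slope $p/q$. Therefore $-\alpha/\beta=\mathrm{slope}(e)$ lies in $\BS(K)$, as desired.

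I expect the main obstacle to be the middle step: carefully upgrading an ideal point of the one-dimensional $A$-polynomial curve to a genuine Culler--Shalen ideal point of a curve of characters --- handling the eigenvalue-versus-trace double cover and the existence of the tautological family --- and then certifying that the resulting essential surface really does meet $\partial E(K)$ with the predicted slope rather than being closed. This last point is exactly where one must use that $x$ lies over a \emph{side}, not a vertex, of $N(f)$, so that some peripheral trace is unbounded at $\tilde x$.
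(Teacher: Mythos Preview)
The paper does not give a proof of this theorem; it is quoted verbatim from \cite[Theorem~3.4]{CCGLS94} and used as a black box. So there is nothing in the present paper to compare your argument against.

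That said, your outline is essentially the original Culler--Shalen argument from \cite{CCGLS94}: reduce to a single irreducible factor via the Minkowski-sum description of Newton polygons, associate to each edge an ideal point of the smooth projective model via Newton--Puiseux, lift to an ideal point of a curve in $X(E(K))$, and run Culler--Shalen to produce an essential surface whose boundary slope is read off from the peripheral valuations. The computation matching the edge slope $-\alpha/\beta$ with the boundary slope via $v_{\tilde x}(M^pL^q)=p\beta+q\alpha=0$ is correct. You have also correctly flagged the one genuinely delicate step, namely lifting an ideal point of the plane curve $C\subset(\C^\times)^2$ back to an ideal point of a curve in the character variety (this uses that, by construction of the $A$-polynomial, $C$ arises from a one-dimensional component of $X(E(K))$ and requires passing to the eigenvalue double cover). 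One small quibble: your closing remark about ``side versus vertex'' is not quite the right diagnosis---what guarantees $\partial S\neq\emptyset$ is that $(\alpha,\beta)\neq(0,0)$, i.e.\ that some peripheral eigenvalue has nonzero valuation at $\tilde x$, which you already established; vertices of $N(f)$ do not separately parametrize ideal points. But this does not affect the validity of the argument.
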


Let us review some facts about the Minkowski sum.
For subsets $T$ and $U$ of $\R^2$, the \emph{Minkowski sum} $T+U$ is defined by $T+U = \{t+u \in \R^2 \mid t \in T,\ u \in U\}$.
One can see that $\Conv(T+U) = \Conv(T)+\Conv(U)$, and hence $N(fg) = N(f)+N(g)$.
The following proposition is well known and plays a key role in the next lemma.

\begin{proposition}[{see \cite[Section~15.1]{Gru67} for example}]
\label{prop:MinkowskiSum}
 Let $P$ and $Q$ be convex polygons.
 Then $\SS(P+Q) = \SS(P) \cup \SS(Q)$.
\end{proposition}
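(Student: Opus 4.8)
The plan is to argue via support functions, which linearize the Minkowski sum. For a convex polygon $P\subset\R^2$ and a unit vector $u\in\R^2$, write $h_P(u)=\max_{x\in P}\langle x,u\rangle$ for the support function and $F(P,u)=\{x\in P\mid \langle x,u\rangle=h_P(u)\}$ for the corresponding face, which is either a vertex or a side of $P$. First I would record the two basic identities $h_{P+Q}=h_P+h_Q$ and $F(P+Q,u)=F(P,u)+F(Q,u)$ for every $u$. The former is immediate from the definition of the Minkowski sum; the latter follows because if $x\in P$, $y\in Q$, and $\langle x+y,u\rangle=h_{P+Q}(u)=h_P(u)+h_Q(u)$, then necessarily $\langle x,u\rangle=h_P(u)$ and $\langle y,u\rangle=h_Q(u)$, and conversely.

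Next I would use the elementary observation that, for convex subsets $A,B\subset\R^2$ each of which is a single point or a line segment, $A+B$ is a segment if and only if at least one of $A,B$ is a segment, and in that case the slope of $A+B$ equals that of whichever of $A,B$ is a segment. The one point needing care is that if \emph{both} $A=F(P,u)$ and $B=F(Q,u)$ are one-dimensional, then they are automatically parallel: any two points of $F(P,u)$ differ by a vector orthogonal to $u$, and the same holds for $F(Q,u)$ with the \emph{same} $u$, so the slope of $A+B$ is well defined and equals the common slope of $A$ and $B$. This is exactly what forces the right-hand side of the claimed identity to be a union rather than something larger.

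Combining these: since the sides of a convex polygon are precisely the faces $F(\cdot,u)$ that are one-dimensional, a number $s$ lies in $\SS(P+Q)$ if and only if $F(P+Q,u)=F(P,u)+F(Q,u)$ is a one-dimensional face of slope $s$ for some $u$, which by the previous paragraph happens if and only if $F(P,u)$ is a side of $P$ of slope $s$ or $F(Q,u)$ is a side of $Q$ of slope $s$. Taking the union over all $u$ gives $\SS(P+Q)=\SS(P)\cup\SS(Q)$. Finally I would dispose of the degenerate cases for safety: if $P$ is a single point then $P+Q$ is a translate of $Q$ and $\SS(P)=\emptyset$, so the identity is trivial; if $P$ is itself a segment the argument above applies verbatim. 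I do not anticipate any real obstacle: the statement is essentially a reformulation of the standard "merge the edges sorted by outward normal" description of the boundary of a Minkowski sum of convex polygons, and the only subtle point is the parallelism remark above.
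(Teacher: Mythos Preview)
Your argument is correct. The paper does not actually prove this proposition; it simply quotes it with a reference to \cite[Section~15.1]{Gru67}, so there is no in-paper proof to compare against. Your support-function approach, reducing to the identity $F(P+Q,u)=F(P,u)+F(Q,u)$ and the observation that all faces with a given outer normal $u$ lie in the line orthogonal to $u$, is the standard one and is essentially what the cited reference contains. The only cosmetic remark is that once you note that every face $F(\cdot,u)$ is contained in a line orthogonal to $u$, the parallelism discussion becomes automatic and you need not treat it as a separate ``subtle point''; but this does not affect the validity of the argument.
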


For a subset $S$ of $\Q\cup\{\infty\}$, we denote by $S^{-1}$ the set $\{s^{-1} \in \Q\cup\{\infty\} \mid s \in S\}$, where we use the convention $0 \cdot \infty = 1$.
Also, for a polynomial $f \in \Z[L,M]$, we define $f^T \in \Z[L,M]$ by $f^T(L,M)=f(M,L)$.

\begin{lemma}
\label{lem:SS^-1}
 Let $f_1, f_2 \in \Z[L,M]$.
 If $\SS(N(f_1)) \cap \SS(N(f_2))^{-1} = \emptyset$, then $\gcd(f_1, f_2^T)$ is a monomial.
\end{lemma}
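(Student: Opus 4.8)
The plan is to argue by contrapositive: suppose $g := \gcd(f_1, f_2^T)$ is not a monomial, and deduce that $\SS(N(f_1)) \cap \SS(N(f_2))^{-1} \neq \emptyset$. Since $g$ divides $f_1$, we may write $f_1 = g h_1$ for some $h_1 \in \Z[L,M]$ (after clearing any rational content; the Newton polygon is insensitive to scaling by a nonzero constant, so we may freely pass between $\Z[L,M]$ and $\C[L,M]$ and ignore units). By the additivity of Newton polygons under multiplication, $N(f_1) = N(g) + N(h_1)$, and then Proposition~\ref{prop:MinkowskiSum} gives $\SS(N(f_1)) = \SS(N(g)) \cup \SS(N(h_1)) \supset \SS(N(g))$. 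Because $g$ is not a monomial, $\SS(N(g)) \neq \emptyset$, so we may pick a slope $s \in \SS(N(g)) \subset \SS(N(f_1))$.

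Next I want to locate $s^{-1}$ inside $\SS(N(f_2))$. The key observation is how transposition of variables acts on Newton polygons: the map $(i,j) \mapsto (j,i)$ is the linear reflection across the diagonal, so $N(f^T)$ is the image of $N(f)$ under this reflection, and consequently a side of $N(f)$ of slope $t$ becomes a side of $N(f^T)$ of slope $t^{-1}$ (with the convention $0 \cdot \infty = 1$ handling the vertical/horizontal cases). Hence $\SS(N(f^T)) = \SS(N(f))^{-1}$ for every $f$. Now, $g$ divides $f_2^T$ as well, so $g^T$ divides $(f_2^T)^T = f_2$; writing $f_2 = g^T h_2$ and applying Proposition~\ref{prop:MinkowskiSum} once more yields $\SS(N(f_2)) \supset \SS(N(g^T)) = \SS(N(g))^{-1}$. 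Since $s \in \SS(N(g))$, we get $s^{-1} \in \SS(N(g))^{-1} \subset \SS(N(f_2))$, equivalently $s \in \SS(N(f_2))^{-1}$. Combined with $s \in \SS(N(f_1))$ from the previous paragraph, this shows $\SS(N(f_1)) \cap \SS(N(f_2))^{-1} \neq \emptyset$, completing the contrapositive.

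The only genuinely delicate point is the bookkeeping around content and units when passing between $\Z[L,M]$ and $\C[L,M]$: divisibility "$g \mid f_1$ in $\Z[L,M]$" should be read in the sense that $f_1 = g h_1$ for a polynomial $h_1$ with rational (or complex) coefficients, which is harmless here since multiplying by a nonzero scalar does not change the support and hence not the Newton polygon. I would state at the outset that all Newton-polygon statements are scalar-invariant and then work freely over $\C$. Everything else is a direct, two-line application of the Minkowski-sum proposition together with the elementary fact that transposing variables reflects Newton polygons across the diagonal and inverts side slopes; I would isolate that last fact as a one-sentence remark so the main argument reads cleanly.
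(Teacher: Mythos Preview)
Your proof is correct and follows essentially the same route as the paper: both use the divisibilities $g \mid f_1$ and $g^T \mid f_2$, apply Proposition~\ref{prop:MinkowskiSum} to get $\SS(N(g)) \subset \SS(N(f_1))$ and $\SS(N(g^T)) \subset \SS(N(f_2))$, and invoke $\SS(N(g)) = \SS(N(g^T))^{-1}$. The only cosmetic difference is that you phrase it as a contrapositive and pick an explicit slope $s$, whereas the paper argues directly that the hypothesis forces $\SS(N(g)) = \emptyset$.
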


\begin{proof}
 Let $g = \gcd(f_1, f_2^T)$.
 Then $g \mid f_1$ and $g^T \mid f_2$.
 By Proposition\nobreakspace \ref {prop:MinkowskiSum}, we have $\SS(N(g)) \subset \SS(N(f_1))$ and $\SS(N(g^T)) \subset \SS(N(f_2))$.
 Since $\SS(N(g)) = \SS(N(g^T))^{-1}$, the assumption implies that $\SS(N(g)) = \emptyset$, namely $g$ is a monomial.
\end{proof}

\begin{corollary}
\label{cor:gcd1_TwoBridge}
If $K_1$ and $K_2$ be any $2$-bridge knots, 
then it holds that $\gcd(A_{K_1}, A_{K_2}^T) = 1$.
\end{corollary}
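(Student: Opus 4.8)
The plan is to combine Theorem~\ref{thm:CCGLS94} with the classification of incompressible surfaces in $2$-bridge knot exteriors and then apply Lemma~\ref{lem:SS^-1}. First I would invoke the theorem of Hatcher and Thurston that every boundary slope of a $2$-bridge knot is an even integer, i.e.\ $\BS(K_i) \subset 2\Z$ for $i = 1,2$. Feeding this into Theorem~\ref{thm:CCGLS94} gives $\SS(N(A_{K_i})) \subset \BS(K_i) \subset 2\Z$ for both $i$.

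Next I would record the elementary fact that $2\Z \cap (2\Z)^{-1} = \emptyset$ inside $\Q\cup\{\infty\}$: an even integer $2m$ equals $(2n)^{-1}$ only when $4mn=1$, which is impossible over $\Z$ (in particular this also rules out $0\in 2\Z$); and the unique element of $(2\Z)^{-1}$ outside $\Q$ is $0^{-1}=\infty\notin 2\Z$. Hence $\SS(N(A_{K_1})) \cap \SS(N(A_{K_2}))^{-1} = \emptyset$, so Lemma~\ref{lem:SS^-1} shows that $\gcd(A_{K_1}, A_{K_2}^T)$ is a monomial. To sharpen this to $\gcd(A_{K_1}, A_{K_2}^T)=1$, I would note that $A_{K_1}$ admits no nonconstant monomial divisor: each irreducible factor of $A_{K_1}$ cuts out a curve that is, by construction, the Zariski closure in $\C^2$ of a nonempty subset of $(\C^\times)^2$, hence is neither the axis $\{L=0\}$ nor $\{M=0\}$, so neither $L$ nor $M$ divides it. Therefore the monomial $\gcd(A_{K_1},A_{K_2}^T)$ is a nonzero constant, i.e.\ equals $1$, completing the proof.

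There is no serious obstacle in the argument; the one input carrying real content is the word \emph{even} in the Hatcher--Thurston statement. It is precisely what removes the slopes $\pm 1$, which are the only integers whose reciprocals are again integers and hence the only integral slopes that could survive in $\SS(N(A_{K_1})) \cap \SS(N(A_{K_2}))^{-1}$. Knowing merely that the boundary slopes of $2$-bridge knots are integers would leave exactly that gap.
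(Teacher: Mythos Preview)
Your proof is correct and follows essentially the same route as the paper: invoke Hatcher--Thurston to get $\BS(K_i)\subset 2\Z$, use Theorem~\ref{thm:CCGLS94} and the disjointness $2\Z\cap(2\Z)^{-1}=\emptyset$ to feed into Lemma~\ref{lem:SS^-1}, and then observe that neither $L$ nor $M$ divides an $A$-polynomial by construction. The only differences are that you spell out the verification of $2\Z\cap(2\Z)^{-1}=\emptyset$ and the reason $A_{K_1}$ has no monomial factor in slightly more detail than the paper does.
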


\begin{proof}
 By \cite[Theorem~1(b)]{HaTh85}, $\BS(K_i) \subset 2\Z$ holds.
 It follows from Theorem\nobreakspace \ref {thm:CCGLS94} that $\SS(N(A_{K_1})) \cap \SS(N(A_{K_2}))^{-1} = \emptyset$, and hence $\gcd(A_{K_1}, A_{K_2}^T)$ is a monomial by Lemma\nobreakspace \ref {lem:SS^-1}.
 Here, in general, the $A$-polynomial of a knot $K$ is divided by neither $L$ nor $M$ by definition.
 Therefore, the monomial must be 1.
\end{proof}

%%%%%%%%%
\subsection{The $\SL(2,\C)$-Reidemeister torsion of 3-manifolds} 
\label{subsec:ReidemeisterTorsion}

For precise definitions of a Reidemeister torsion, 
please see Johnson~\cite{Joh}, Kitano~\cite{Kit94a, Kit94b} and Milnor~\cite{Mil61,Mil62} as references. 

Let $M$ be a 3-manifold and let $\rho \in R(M)$ be an acyclic representation. 
That is, $C_\ast(M;\C^2_{\rho})$ is an acyclic chain complex with twisted coefficients. 

Then one gets a nonzero complex number $\tau_\rho(M) \in \C^\times$ for an acyclic chain complex $C_\ast(M;\C^2_\rho)$. 
We call it the \emph{$\SL(2,\C)$-Reidemeister torsion} of $M$ for $\rho$.

\begin{remark}
 Throughout this paper, we set $\tau_\rho(M) = 0$ if $\rho$ is not acyclic.
 Then $\tau_\rho(M)$ can be regarded as a function on $R(M)$ and also on $X(M)$.
\end{remark}

One can use the well-known multiplicativity of the Reidemeister torsion to compute it as below. 

\begin{proposition}
\label{prop:Multiplicativity}
Let $M$ be a $3$-manifold decomposed into $M_1$ and $M_2$ by an embedded torus $T^2$. 
Let $\rho\colon \pi_1(M) \to\SL(2,\C)$ be a representation. 
Suppose that $\rho$ is acyclic on $\pi_1(T^2)$. 
Then it holds that $\rho$ is acyclic on $\pi_1(M)$ if and only if it is acyclic on both $\pi_1(M_1)$ and $\pi_1(M_2)$. 
Further in this case it holds that 
 $$\tau_\rho(M) = \tau_\rho(M_1) \tau_\rho(M_2).$$
\end{proposition}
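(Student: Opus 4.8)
The plan is to deduce both assertions from the Mayer--Vietoris short exact sequence of twisted cellular chain complexes, together with Milnor's multiplicativity of the Reidemeister torsion along a short exact sequence.

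First I would fix a CW structure on $M$ in which $T^2$ is a subcomplex and $M_1,M_2$ are subcomplexes with $M_1\cap M_2 = T^2$, and restrict $\rho$ along the homomorphisms $\pi_1(T^2)\to\pi_1(M)$ and $\pi_1(M_i)\to\pi_1(M)$ induced by inclusion. Inclusion--exclusion on cells then yields the Mayer--Vietoris short exact sequence of $\C^2_\rho$-twisted chain complexes
\[
 0 \longrightarrow C_\ast(T^2;\C^2_\rho) \longrightarrow C_\ast(M_1;\C^2_\rho)\oplus C_\ast(M_2;\C^2_\rho) \longrightarrow C_\ast(M;\C^2_\rho) \longrightarrow 0,
\]
with the first map $c\mapsto(c,-c)$ and the second $(a,b)\mapsto a+b$. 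The associated long exact homology sequence, combined with the hypothesis that $C_\ast(T^2;\C^2_\rho)$ is acyclic, shows that inclusion induces isomorphisms $H_n(M_1;\C^2_\rho)\oplus H_n(M_2;\C^2_\rho)\cong H_n(M;\C^2_\rho)$ for every $n$; hence $\rho$ is acyclic on $\pi_1(M)$ if and only if it is acyclic on both $\pi_1(M_1)$ and $\pi_1(M_2)$, which settles the first statement.

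Assume now that all three complexes are acyclic. Milnor's multiplicativity lemma for a short exact sequence of based chain complexes (see, e.g., \cite{Mil62, Joh}) gives $\tau\bigl(C_\ast(M_1;\C^2_\rho)\oplus C_\ast(M_2;\C^2_\rho)\bigr) = \pm\,\tau\bigl(C_\ast(T^2;\C^2_\rho)\bigr)\cdot\tau\bigl(C_\ast(M;\C^2_\rho)\bigr)$, once one checks that the geometric cellular bases are compatible with the sequence in Milnor's sense: lifting a cell of $M$ lying in $M_1\setminus T^2$ (resp.\ in $M_2$) to $(e,0)$ (resp.\ $(0,f)$), this lift together with the image of the cellular basis of $C_\ast(T^2;\C^2_\rho)$ becomes the direct-sum basis after a unipotent change of basis. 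Since the torsion of the homology long exact sequence is trivial (it is the zero complex) and the torsion of a direct sum of based complexes is the product of the torsions, this reads $\tau_\rho(M_1)\,\tau_\rho(M_2) = \pm\,\tau_\rho(T^2)\,\tau_\rho(M)$. Finally I would invoke the classical fact $\tau_\rho(T^2) = 1$ for every acyclic $\rho$ -- which follows, for instance, from the product CW structure $T^2 = S^1\times S^1$, the tensor product formula for torsion and $\chi(S^1) = 0$, together with density of the diagonalizable acyclic representations of $\Z^2$ and continuity of $\tau$ -- to conclude $\tau_\rho(M) = \pm\,\tau_\rho(M_1)\,\tau_\rho(M_2)$; with the standard sign conventions for the torsion of $3$-manifolds the sign is $+1$.

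The formal skeleton above is routine; the two places that actually require care are verifying that the cellular bases match up as Milnor's lemma demands (so that no spurious unit is introduced), and the input $\tau_\rho(T^2) = 1$, which is the only genuinely non-formal ingredient and is precisely what removes the torsion of the separating torus from the final identity.
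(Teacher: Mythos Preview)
The paper does not prove this proposition at all: it is stated as a well-known consequence of the multiplicativity of Reidemeister torsion, with the references \cite{Joh,Kit94a,Kit94b,Mil61,Mil62} given only as general background. Your argument is exactly the standard proof one finds in those sources, so there is nothing to compare against; your write-up simply supplies what the paper omits.

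Your sketch is correct. A couple of minor comments. First, your justification of $\tau_\rho(T^2)=1$ via the product formula, density of diagonalizable representations, and continuity is valid but more elaborate than necessary: with the standard CW structure on $T^2$ (one $0$-cell, two $1$-cells, one $2$-cell) the twisted complex is
\[
0 \to \C^2 \xrightarrow{\ \begin{pmatrix} I-Y \\ X-I \end{pmatrix}\ } \C^2\oplus\C^2 \xrightarrow{\ (X-I\ \ Y-I)\ } \C^2 \to 0,
\]
and if, say, $X-I$ is invertible one computes the torsion directly from the section $s(v)=((X-I)^{-1}v,0)$; the resulting $4\times4$ change-of-basis matrix is block-triangular with blocks $(X-I)^{-1}$ and $X-I$, giving determinant $1$. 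Second, your remark that ``with the standard sign conventions the sign is $+1$'' is the right attitude but is the one place where, if you were writing this out in full, you would want to be explicit about how the bases are ordered so that the unipotent change you describe really has determinant $+1$; as a sketch it is fine.
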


One needs the acyclicity of representations to use the above. 
First we mention the following lemma. 

\begin{lemma}
\label{lem:AcyclicOnTorus}
Let $\rho$ be a representation $\pi_1(T^2) \to \SL(2,\C)$. 
Then it holds that $\rho$ is acyclic if and only if $\rho$ is not parabolic. 
Here $\rho$ is said to be \emph{parabolic} if $\tr\rho(x)=2$ for any $x \in\pi_1(T^2)$.
\end{lemma}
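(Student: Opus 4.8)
The plan is to compute the twisted homology $H_\ast(T^2;\C^2_\rho)$ directly from a cell structure and read off acyclicity. Since both acyclicity of $\rho$ and the parabolic condition are invariant under conjugation, I would first replace $\rho$ by a conjugate $\rho'$ with $\rho'(\lambda)$ and $\rho'(\mu)$ upper triangular, say $\rho'(\lambda)=\left(\begin{smallmatrix}\alpha & p\\0 & \alpha^{-1}\end{smallmatrix}\right)$ and $\rho'(\mu)=\left(\begin{smallmatrix}\beta & q\\0 & \beta^{-1}\end{smallmatrix}\right)$, as in Section~\ref{subsec:CharacterVariety}. A preliminary observation is that $\rho$ is parabolic if and only if $\alpha=\beta=1$: indeed $\tr\rho(\lambda^a\mu^b)=\alpha^a\beta^b+\alpha^{-a}\beta^{-b}$, and since $t+t^{-1}=2$ forces $t=1$, this trace equals $2$ for all $a,b\in\Z$ precisely when $\alpha^a\beta^b=1$ for all $a,b$, i.e.\ (taking $(a,b)=(1,0),(0,1)$) when $\alpha=\beta=1$; conversely, if $\alpha=\beta=1$ then every matrix $\rho'(\lambda^a\mu^b)$ is unipotent and so has trace $2$.

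Next I would write down the cellular chain complex of $T^2$ with coefficients in $\C^2_{\rho'}$, using the CW structure with one $0$-cell, two $1$-cells (carrying $\lambda$ and $\mu$) and one $2$-cell glued along the commutator $[\lambda,\mu]$. It takes the form $0\to\C^2\xrightarrow{\partial_2}\C^2\oplus\C^2\xrightarrow{\partial_1}\C^2\to0$, where $\partial_1=\bigl(\,\rho'(\lambda)-I_2\ \ \rho'(\mu)-I_2\,\bigr)$ and, by the Fox calculus applied to $[\lambda,\mu]$ together with the fact that $\rho'$ has abelian image, $\partial_2=\left(\begin{smallmatrix}I_2-\rho'(\mu)\\\rho'(\lambda)-I_2\end{smallmatrix}\right)$. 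Since $\chi(T^2)=0$, the alternating sum $\dim H_0-\dim H_1+\dim H_2$ vanishes, so it suffices to control $H_0$ and $H_2$.

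Now the two directions. If $\rho$ is not parabolic, then $\alpha\neq1$ or $\beta\neq1$; assume $\alpha\neq1$, the other case being symmetric. Then $\det(\rho'(\lambda)-I_2)=(\alpha-1)(\alpha^{-1}-1)=2-\tr\rho(\lambda)\neq0$, so $\rho'(\lambda)-I_2$ is invertible. Hence $\partial_1$ is surjective (its first block already surjects onto $\C^2$) and $\partial_2$ is injective (its second block is injective), so $H_0=H_2=0$, and then $H_1=0$ by the Euler characteristic; thus $\rho$ is acyclic. Conversely, if $\rho$ is parabolic, then $\alpha=\beta=1$, so $\rho'(\lambda)$ and $\rho'(\mu)$ are unipotent upper triangular and the first standard basis vector $e_1$ lies in $\Ker(\rho'(\lambda)-I_2)\cap\Ker(\rho'(\mu)-I_2)=\Ker\partial_2=H_2$; hence $H_2\neq0$ and $\rho$ is not acyclic. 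Combining these with the preliminary observation gives exactly the asserted equivalence.

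I do not expect a genuine obstacle here. The steps that need care are the Fox-calculus identification of $\partial_2$ and, slightly more subtly, the non-semisimple parabolic representations (those with $\alpha=\beta=1$ but $(p,q)\neq(0,0)$), which the argument above covers uniformly because $e_1$ is always a common $1$-eigenvector in that case. One could alternatively deduce $\dim H_0=\dim H_2$ from Poincaré duality and self-duality of the $\SL(2,\C)$ local system, but the explicit invertibility of $\rho'(\lambda)-I_2$ makes that detour unnecessary.
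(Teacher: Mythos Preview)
Your proof is correct and follows essentially the same line as the paper's: both write down the twisted cellular complex of $T^2$, identify the invertibility of $\rho(\lambda)-I_2$ (or $\rho(\mu)-I_2$) as the key to vanishing homology in the non-parabolic case, and use $\chi(T^2)=0$ to handle $H_1$. The only difference is in how $H_2$ is treated: the paper shows $H_0=0$ and then invokes Poincar\'e and Kronecker duality (passing to the contragredient representation $\check{\rho}$, which is parabolic exactly when $\rho$ is) to get $H_2=0$, whereas you read off both $H_0=0$ and $H_2=0$ directly from the same invertible block, and in the parabolic direction you exhibit $e_1\in\Ker\partial_2$ to get $H_2\neq0$ rather than showing $H_0\neq0$ via $\Coker\partial_1$. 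Your direct computation is slightly more elementary and avoids the duality detour you yourself flag at the end; the paper's duality argument has the mild advantage of making the symmetry $H_0\cong H_2$ conceptual.
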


\begin{proof}

First note that for a basis $\{x, y\}$ of $\pi_1(T^2)$ the chain complex $C_\ast(T^2;\C^2_\rho)$ is given by 
\[
0 \to \C^2 \xrightarrow{\partial_2} \C^2\oplus \C^2 \xrightarrow{\partial_1} \C^2 \to 0,
\]
where 
\[
\partial_2=\begin{pmatrix} -(\rho(y)-I_2)& \rho(x)-I_2\end{pmatrix},\ 
\partial_1=\begin{pmatrix} \rho(x)-I_2\\ \rho(y)-I_2\end{pmatrix}.
\]
We here show that $\rho$ is not parabolic if and only if $H_0(T^2;\C^2_\rho)=0$.
If $\rho$ is not parabolic, then there is a basis $\{x, y\}$ such that $\det(\rho(x)-I_2)\neq 0$, and thus $H_0(T^2;\C^2_\rho)=0$.
Conversely, if $\rho$ is parabolic, then $\rho(x)$ and $\rho(y)$ are simultaneously of the form $
\begin{pmatrix}
 1 & \ast \\
 0 & 1
\end{pmatrix}$
by taking conjugate, and therefore $H_0(T^2;\C^2_\rho) \neq 0$.

Next, if $H_0(T^2;\C^2_\rho)=0$, then $\rho$ is acyclic.
Indeed, by Kronecker duality (or the universal coefficient theorem) and Poincar\'e duality, $H_2(T^2;\C^2_\rho) \cong H_0(T^2;\C^2_{\check{\rho}})$, where $\check{\rho}(\gamma) := {}^t\!\rho(\gamma)^{-1}$.
When $\rho$ is parabolic, so is $\check{\rho}$.
It follows from $\chi(T^2)=0$ that $H_1(T^2;\C^2_\rho)=0$.
\end{proof}

%%%%%%%%%%%%%%%%%%%%%%%%%%%%%%%%%
\section{Proof of the main theorem}
\label{sec:MainTheorem}

Recall that $\Sigma(K_1,K_2)$ denotes the splice. 
The following lemma is shown in \cite[Proof of Corollary~3.3]{BoCu08}. 
We give a proof to be self-contained. 

\begin{lemma}
\label{lemma:IrreducibilityOnSplice}
If $\rho$ is irreducible on $\pi_1(\Sigma(K_1,K_2))$, 
then the restrictions of $\rho$ on $\pi_1(E(K_1))$ and $\pi_1(E(K_2))$ are also irreducible. 
\end{lemma}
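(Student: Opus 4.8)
The plan is to argue by contraposition: assuming that the restriction of $\rho$ to (say) $\pi_1(E(K_1))$ is reducible, I will produce a nonzero vector in $\C^2$ fixed by all of $\rho(\pi_1(\Sigma(K_1,K_2)))$, contradicting irreducibility. Recall that $\Sigma(K_1,K_2) = E(K_1) \cup_h E(K_2)$ with the gluing torus $T = \partial E(K_1) = \partial E(K_2)$, so $\pi_1(\Sigma(K_1,K_2))$ is generated by the images of $\pi_1(E(K_1))$ and $\pi_1(E(K_2))$, amalgamated over $\pi_1(T) \cong \Z^2$. Hence it suffices to find a common fixed line for $\rho|_{\pi_1(E(K_1))}$ and $\rho|_{\pi_1(E(K_2))}$.

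First I would recall the standard structural fact that a reducible $\SL(2,\C)$-representation of a knot group is conjugate to an upper-triangular one, and moreover its restriction to the peripheral torus is then upper triangular as well; in particular the diagonal (semisimplification) characters factor through $H_1(E(K_1);\Z) \cong \Z$, which is generated by the meridian $\mu_1$. The key point is that the longitude $\lambda_1$ is nullhomologous in $E(K_1)$, so in the abelianization it is trivial; therefore in the upper-triangular form the diagonal entries of $\rho(\lambda_1)$ are both $1$, i.e. $\rho(\lambda_1)$ is parabolic (possibly trivial). Under the splicing homeomorphism $h$, the meridian $\mu_2$ of $K_2$ is identified with $\lambda_1$, so $\rho(\mu_2)$ is parabolic. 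Now $\mu_2$ normally generates $\pi_1(E(K_2))$ (a knot meridian always does), and a subgroup of $\SL(2,\C)$ normally generated by a parabolic or trivial element is contained in a Borel subgroup — equivalently, $\rho|_{\pi_1(E(K_2))}$ is reducible. Thus both restrictions are reducible, each fixing a line in $\C^2$.

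It remains to see that these two invariant lines coincide, so that the whole of $\pi_1(\Sigma(K_1,K_2))$ fixes it. Here I would use the peripheral torus $T$: its $\pi_1$ injects (up to the usual care) into both knot groups, and the image $\rho(\pi_1(T))$ is an abelian subgroup of $\SL(2,\C)$ that is non-central — indeed $\rho(\mu_1)$ cannot be $\pm I_2$ when $\rho|_{\pi_1(E(K_1))}$ is reducible and nontrivial, and if $\rho(\mu_1) = \pm I_2$ then $\rho|_{\pi_1(E(K_1))}$ is itself central-on-the-meridian hence (being normally generated by $\mu_1$) has abelian, in fact central, image, forcing $\rho$ to be reducible on the whole splice anyway. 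A non-central abelian subgroup of $\SL(2,\C)$ fixes a unique line (if it contains a non-parabolic element, a unique pair of lines; but the argument can be run so that the fixed line of each reducible restriction is the one coming from $\rho(\pi_1(T))$). Since both $\rho|_{\pi_1(E(K_1))}$ and $\rho|_{\pi_1(E(K_2))}$ fix the line determined by $\rho(\pi_1(T))$, the generated group $\pi_1(\Sigma(K_1,K_2))$ fixes it, so $\rho$ is reducible, completing the contrapositive.

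The main obstacle is the bookkeeping around degenerate cases — when $\rho(\pi_1(T))$ is central, or when one of the restrictions is trivial or abelian — because in those situations "the invariant line" is not unique and one must instead argue directly that $\rho$ is reducible on the splice; I expect the cleanest route is to dispatch the case $\rho(\mu_1) = \pm I_2$ (and symmetrically $\rho(\mu_2) = \pm I_2$) separately at the outset, and in the remaining case exploit that a non-central element of $\rho(\pi_1(T))$ pins down a unique common eigenvector shared by both reducible restrictions.
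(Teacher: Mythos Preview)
There is a genuine gap. The claim ``a subgroup of $\SL(2,\C)$ normally generated by a parabolic or trivial element is contained in a Borel subgroup'' is false, and this is the step on which your argument hinges. A concrete counterexample comes from hyperbolic knots: the holonomy representation of the complete hyperbolic structure on the figure-eight knot complement sends the meridian to a parabolic element, yet the representation is irreducible. More elementarily, $T=\begin{pmatrix}1&1\\0&1\end{pmatrix}$ normally generates $\SL(2,\Z)$ inside $\SL(2,\Z)$ (its conjugate $STS^{-1}$ together with $T$ already generate), and $\SL(2,\Z)$ is certainly not contained in a Borel. So knowing only that $\rho(\mu_2)=\rho(\lambda_1)$ is parabolic does \emph{not} force $\rho|_{\pi_1(E(K_2))}$ to be reducible, and the rest of your argument (matching invariant lines through the torus) never gets off the ground.

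What the paper does instead is to squeeze more out of the peripheral torus of $K_1$. After conjugating so that $\rho|_{\pi_1(E(K_1))}$ is upper triangular, one has $\rho(\lambda_1)=\begin{pmatrix}1&\alpha\\0&1\end{pmatrix}$ as you say. The extra observation is that $\rho(\mu_1)$ commutes with $\rho(\lambda_1)$; if $\alpha\neq 0$ this forces $\rho(\mu_1)=\begin{pmatrix}\pm1&\ast\\0&\pm1\end{pmatrix}$. Since the diagonal character of the upper-triangular representation factors through $H_1(E(K_1))\cong\Z\langle\mu_1\rangle$, the entire image $\rho(\pi_1(E(K_1)))$ then lies in $\left\{\begin{pmatrix}\pm1&\ast\\0&\pm1\end{pmatrix}\right\}$, which is abelian; but then $\rho(\lambda_1)=I_2$ because $\lambda_1$ is a commutator, contradicting $\alpha\neq 0$. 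Hence in every case $\rho(\lambda_1)=I_2$, so $\rho(\mu_2)=I_2$, so $\rho|_{\pi_1(E(K_2))}$ is trivial (not merely reducible), and the line fixed by $\rho|_{\pi_1(E(K_1))}$ is automatically fixed by the whole splice group. Your line-matching discussion and the degenerate-case bookkeeping become unnecessary once you know $\rho(\mu_2)=I_2$.
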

%}

\begin{proof}
Assume that $\rho$ is reducible on $\pi_1(E(K_1))$.  
Then we may take $\rho$ as an upper triangular representation on it. 
Since the longitude $\lambda_1$ of $K_1$ belongs to the commutator subgroup $[\pi_1(E(K_1)),\pi_1(E(K_1))]$, 
then one can see that $L_1$ is an upper triangular parabolic matrix as 
$L_1= \rho(\lambda_1) = \begin{pmatrix} 1 & \alpha\\ 0 & 1\end{pmatrix}$.

If $\alpha=0$, then $L_1$ is the identity and hence $X_2=L_1$ is also the identity matrix. 
This means that $\rho$ must be trivial on $\pi_1(E(K_2))$ and this is a contradiction. 

Therefore we may assume $\alpha\neq 0$. 
Since $X_1$ commutes with $L_1$, 
then $X_1$ is also an upper triangular matrix as  
$X_1=\begin{pmatrix} \pm 1 & \beta\\ 0 & \pm 1\end{pmatrix}~(\beta\neq 0)$. 
Hence the image $\rho(\pi_1(E(K_1)))$ is an upper triangular subgroup.  
Since this is an abelian subgroup in $\SL(2,\C)$, then $L_1$ must be also the identity. 
%Therefore $X_2=L_1$ is also the identity matrix. 
%This means that $\rho$ must be trivial on $\pi_1(E(K_2))$. 
This is a contradiction. 
\end{proof}

\begin{remark}
By the above arguments, it can be seen that there exists no reducible representation except the trivial representation. 
\end{remark}

Next we can see the following. 
\begin{proposition}
\label{prop:AcyclicOnSplice}
If $\rho\colon \pi_1(\Sigma(K_1,K_2)) \to\SL(2,\C)$ be an acyclic representation, 
then its restriction $\rho|_{\pi_1(T^2)}$ is also acyclic. 
\end{proposition}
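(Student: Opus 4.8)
The plan is to prove the contrapositive via Lemma~\ref{lem:AcyclicOnTorus}: if $\rho|_{\pi_1(T^2)}$ is parabolic, then $\rho$ is not acyclic on $\pi_1(\Sigma(K_1,K_2))$. So suppose $\rho|_{\pi_1(T^2)}$ is parabolic. First I would dispose of the reducible case: by the Remark following Lemma~\ref{lemma:IrreducibilityOnSplice}, a reducible $\rho$ on $\pi_1(\Sigma(K_1,K_2))$ is trivial, and the trivial representation is certainly not acyclic (e.g. $H_0 \neq 0$), so we may assume $\rho$ is irreducible. By Lemma~\ref{lemma:IrreducibilityOnSplice} the restrictions to $\pi_1(E(K_1))$ and $\pi_1(E(K_2))$ are then irreducible as well.

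Next I would pin down what parabolicity on $T^2$ forces. Recall $T^2 = \partial E(K_1) = \partial E(K_2)$ with the gluing $h$ identifying the meridian $\mu_1$ of $K_1$ with the longitude $\lambda_2$ of $K_2$ and vice versa. Under the identification $\pi_1(T^2)=\Z^2$, the matrices $\rho(\mu_1),\rho(\lambda_1)$ (equivalently $\rho(\lambda_2),\rho(\mu_2)$) commute and, being parabolic, are simultaneously conjugate into $\left(\begin{smallmatrix}1 & \ast\\ 0 & 1\end{smallmatrix}\right)$. In particular $\rho(\lambda_1) = \left(\begin{smallmatrix}1 & \alpha\\ 0 & 1\end{smallmatrix}\right)$. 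The key observation is that the longitude $\lambda_1$ is null-homologous in $H_1(E(K_1))$, hence lies in the commutator subgroup, so it maps to a product of commutators; arguing as in the proof of Lemma~\ref{lemma:IrreducibilityOnSplice} (but now without assuming reducibility of $\rho$ on the whole group) I would show $\rho(\lambda_1)$ must actually be the identity — wait, this needs care, since here $\rho$ restricted to $E(K_1)$ is irreducible, so that shortcut is unavailable. Instead, the cleaner route: a parabolic element of $\pi_1(T^2)$ that is central... no. The honest approach is to use the multiplicativity machinery (Proposition~\ref{prop:Multiplicativity}) but in its degenerate form. Since $\rho|_{\pi_1(T^2)}$ is parabolic, Lemma~\ref{lem:AcyclicOnTorus} says it is not acyclic, so $H_0(T^2;\C^2_\rho)\neq 0$; concretely the line fixed by the common upper-triangular form is a nonzero invariant vector, giving a $1$-dimensional space of $0$-cycles invariant under $\pi_1(T^2)$.

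The main step is then a Mayer–Vietoris argument. Writing $M = \Sigma(K_1,K_2)$, $M_1 = E(K_1)$, $M_2 = E(K_2)$, $M_1 \cap M_2 \simeq T^2$, I would use the Mayer–Vietoris sequence with $\C^2_\rho$-coefficients:
\[
\cdots \to H_1(T^2;\C^2_\rho) \to H_1(M_1;\C^2_\rho)\oplus H_1(M_2;\C^2_\rho) \to H_1(M;\C^2_\rho) \to H_0(T^2;\C^2_\rho) \xrightarrow{\;\iota\;} H_0(M_1;\C^2_\rho)\oplus H_0(M_2;\C^2_\rho) \to \cdots
\]
Since $\rho$ is irreducible on $\pi_1(M_i)$, it has no nonzero invariant vector there, so $H_0(M_i;\C^2_\rho)=0$ for $i=1,2$; hence the map $\iota$ is zero, and exactness forces the surjection $H_1(M;\C^2_\rho)\twoheadrightarrow H_0(T^2;\C^2_\rho)\neq 0$. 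Therefore $H_1(M;\C^2_\rho)\neq 0$, so $\rho$ is not acyclic on $\pi_1(M)$. This proves the contrapositive, hence the proposition.

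The step I expect to be the main obstacle is verifying carefully that $H_0(M_i;\C^2_\rho)=0$ — i.e. that irreducibility of $\rho|_{\pi_1(E(K_i))}$ genuinely forces the absence of a nonzero invariant vector (a nonzero invariant vector would mean the image lies in a common parabolic or identity subgroup, contradicting irreducibility), and checking that the connecting maps in Mayer–Vietoris are the ones induced by inclusion so that $\iota = 0$ really does give the claimed surjection; one should also double-check there is no subtlety with the GIT/character-variety level versus the genuine representation. Everything else is bookkeeping with the long exact sequence and the standard chain complex of $T^2$ already written out in the proof of Lemma~\ref{lem:AcyclicOnTorus}.
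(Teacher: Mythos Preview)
Your proof is correct and follows essentially the same route as the paper: Mayer--Vietoris for $\Sigma(K_1,K_2)=E(K_1)\cup_{T^2}E(K_2)$, combined with $H_0(E(K_i);\C^2_\rho)=0$ coming from irreducibility of the restrictions (Lemma~\ref{lemma:IrreducibilityOnSplice}), is used to contradict $H_0(T^2;\C^2_\rho)\neq 0$; the only cosmetic difference is that the paper argues by contradiction (assuming $\rho$ acyclic, so the long exact sequence degenerates to isomorphisms $H_k(T^2)\cong H_k(E(K_1))\oplus H_k(E(K_2))$), while you run the contrapositive through the connecting map. On the obstacle you flagged: $H_0$ is \emph{coinvariants}, not invariants, but irreducibility still gives the vanishing, since the span of all $(\rho(g)-I_2)v$ is a nonzero $\pi_1(E(K_i))$-submodule of $\C^2$ and hence all of $\C^2$.
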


\begin{proof}
Assume that $\rho|_{\pi_1(T^2)}$ is not acyclic. 
Consider the homology long exact sequence for 
\[
0 \to C_\ast(T^2;\C^2_\rho) \to C_\ast(E(K_1);\C^2_\rho)\oplus C_\ast(E(K_2);\C^2_\rho) \to C_\ast(\Sigma(K_1,K_2);\C^2_\rho) \to 0.
\]
Here we simply write $\rho$ for each of $\rho|_{\pi_1(T^2)}$, $\rho|_{\pi_1(E(K_1))}$, and $\rho|_{\pi_1(E(K_2))}$. 
Since $C_\ast(\Sigma(K_1,K_2);\C^2_\rho)$ is acyclic, we have the exact sequences
\[
0 \to H_2(T^2;\C^2_\rho) \to H_2(E(K_1);\C^2_\rho)\oplus H_2(E(K_2);\C^2_\rho) \to 0,
\]
\[
0 \to H_1(T^2;\C^2_\rho) \to H_1(E(K_1);\C^2_\rho)\oplus H_1(E(K_2);\C^2_\rho) \to 0,
\]
\[
0 \to H_0(T^2;\C^2_\rho) \to H_0(E(K_1);\C^2_\rho)\oplus H_0(E(K_2);\C^2_\rho) \to 0.
\]
Since $\rho$ is not acyclic on $\pi_1(T^2)$, 
$\rho$ is parabolic on it 
by Lemma\nobreakspace \ref {lem:AcyclicOnTorus}. 
If it is trivial on $\pi_1(T^2)$, it should be trivial on $\pi_1(\Sigma(K_1,K_2))$. 
Then it is not acyclic on $\Sigma(K_1,K_2)$. 
For any non-trivial parabolic representation $\rho$ on $\pi_1(T^2)$, 
it is easy to see 
\[
H_2(T^2;\C^2_\rho)\cong H_0(T^2;\C^2_\rho)\cong \C,\ H_1(T^2;\C^2_\rho)\cong \C^2
\]
by the proof of Lemma\nobreakspace \ref {lem:AcyclicOnTorus}.
If $\rho$ is irreducible, then both $\rho|_{\pi_1(E(K_1))}$ and $\rho|_{\pi_1(E(K_2))}$ are irreducible 
by Lemma\nobreakspace \ref {lemma:IrreducibilityOnSplice}.  
Then it holds that $H_0(E(K_1);\C^2_\rho)$ and $H_0(E(K_2);\C^2_\rho)$ are vanishing. 
Therefore $H_0(T^2;\C^2_\rho)$ is vanishing in this case by the above exact sequences. 
It is contradiction. 

Next assume that $\rho$ is reducible. 
Now we may assume that the image of $\rho$ belongs to the upper triangular subgroup. 
It is easily seen that the images of the longitudes are trivial $I_2$ or $\begin{pmatrix} 1 & \ast \\0 & 1\end{pmatrix}$ 
since the longitudes belong to the commutator subgroup. 
Therefore the image of each meridian is in the upper triangular parabolic subgroup by the definition of a splice, and thus $\rho$ is abelian. 
This contradicts the fact that the abelianization of $\pi_1(\Sigma(K_1,K_2))$ is trivial.
%}
\end{proof}

\begin{lemma}
 \label{lem:dim1}
 Let $f\colon X \to Y$ be a non-constant regular map between affine algebraic sets $X$ and $Y$.
 If $X$ is irreducible and $\dim X = 1$, then $f^{-1}(\{y\})$ is a finite \textup{(}possibly empty\textup{)} set for any $y \in Y$.
\end{lemma}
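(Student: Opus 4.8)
The plan is to combine the continuity of regular maps in the Zariski topology with the elementary fact that a proper closed subset of an irreducible curve is a finite set of points.

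First I would note that $f^{-1}(\{y\})$ is a closed subset of $X$. Indeed, a regular map between affine algebraic sets is continuous for the Zariski topologies, and a point $\{y\}$ of an affine algebraic set is closed; hence its preimage is closed in $X$. If $f^{-1}(\{y\})$ is empty we are done, so assume it is nonempty.

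Next I would check that $f^{-1}(\{y\})$ is a \emph{proper} subset of $X$. If instead $f^{-1}(\{y\}) = X$, then $f(X) = \{y\}$, contradicting the hypothesis that $f$ is non-constant. Therefore $f^{-1}(\{y\})$ is a proper closed subset $Z \subsetneq X$ of the irreducible set $X$, and since $\dim X = 1$ this forces $\dim Z \le \dim X - 1 = 0$. Finally, a closed subset $Z$ of dimension $0$ in an affine algebraic set over $\C$ is a finite set of points: writing $Z$ as the union of its finitely many irreducible components $Z_1, \dots, Z_m$, each $Z_j$ is an irreducible affine algebraic set of dimension $0$, whose coordinate ring is a finitely generated reduced $\C$-algebra of Krull dimension $0$, hence an Artinian reduced ring that is a field, and by the Nullstellensatz this field is $\C$; thus each $Z_j$ is a single point and $Z$ is finite.

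I do not expect a serious obstacle here, as the statement is a standard fact in elementary algebraic geometry. The only step that merits a line of justification is the last one, namely that a zero-dimensional closed subset of an affine algebraic set is finite; alternatively, one could simply invoke a standard reference on affine algebraic sets in place of the coordinate-ring argument.
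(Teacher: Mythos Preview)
Your proof is correct and follows essentially the same approach as the paper: the fiber is closed, proper (by non-constancy), hence its irreducible components are proper closed subsets of the irreducible curve $X$ and thus zero-dimensional. The paper's argument is more terse and leaves implicit the finiteness of a zero-dimensional closed set, which you spell out.
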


\begin{proof}
 The inverse image $f^{-1}(\{y\})$ is a closed subset of $X$, namely $f^{-1}(\{y\})$ is a finite union of irreducible algebraic sets.
 Since they are proper algebraic subsets of $X$, they are of dimension zero.
\end{proof}

The next lemma follows from Lemma\nobreakspace \ref {lem:dim1} or B\'ezout's theorem. 
%\redtext{William Fulton, Algebraic Curves. あたりを文献にあげる？}

\begin{lemma}
 \label{lem:gcd1} 
 Let $f, g \in \C[L, M]$.
 Then $\{f=g=0\} \subset \C^2$ is a finite set if and only if $\gcd(f,g)=1$.
\end{lemma}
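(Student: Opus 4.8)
The plan is to prove the two implications separately, the forward direction $\gcd(f,g)=1 \Rightarrow \{f=g=0\}$ finite being the one requiring work. For the easy direction, suppose $h := \gcd(f,g)$ is non-constant. Then $h$ is a non-constant polynomial in two variables, so $\{h=0\}\subset\C^2$ is infinite (for all but finitely many specializations of one variable, $h$ restricts to a nonzero polynomial of positive degree in the other, hence has a root; and if $h$ depends on only one variable the claim is clear). Since $h\mid f$ and $h\mid g$, we have $\{h=0\}\subseteq\{f=g=0\}$, so the latter is infinite.

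For the converse, assume $\gcd(f,g)=1$. If $f$ (or $g$) is a nonzero constant then $\{f=0\}=\emptyset$ and we are done, so assume both are non-constant. Write $\{f=0\}=C_1\cup\dots\cup C_k$ as its decomposition into irreducible components; each $C_j$ is a plane curve, so $\dim C_j=1$. Since no irreducible factor of $f$ divides $g$, the polynomial $g$ does not vanish identically on any $C_j$. Now examine $g|_{C_j}\colon C_j\to\C$: if it is non-constant, then by \cref{lem:dim1} its fiber $(g|_{C_j})^{-1}(\{0\})$ is finite; if it is constant, the constant is nonzero (as $g\not\equiv 0$ on $C_j$), so that fiber is empty. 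Either way $\{f=g=0\}\cap C_j=(g|_{C_j})^{-1}(\{0\})$ is finite, and hence $\{f=g=0\}=\bigcup_j\bigl(\{f=g=0\}\cap C_j\bigr)$ is a finite union of finite sets.

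The main obstacle is the case analysis on $g|_{C_j}$: one must observe that $\gcd(f,g)=1$ rules out $g|_{C_j}\equiv 0$, and that a regular map from an irreducible curve to $\C$ is either constant or has finite fibers. An alternative route avoids this: by Gauss's lemma $f,g$ stay coprime in the PID $\C(L)[M]$, so $Af+Bg=d(L)$ for some $A,B\in\C[L,M]$ and nonzero $d\in\C[L]$; thus the $L$-coordinate of any common zero is a root of $d$, and symmetrically its $M$-coordinate lies in a finite set, so $\{f=g=0\}$ is finite. The B\'ezout-theorem route instead bounds $\#\{f=g=0\}\le(\deg f)(\deg g)$ directly, using that $f$ and $g$ share no common component.
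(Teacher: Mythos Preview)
Your proof is correct and follows essentially the route the paper indicates: the paper does not spell out a proof but simply remarks that the lemma follows from \cref{lem:dim1} or from B\'ezout's theorem, and your main argument is precisely the \cref{lem:dim1} route (decompose $\{f=0\}$ into irreducible curves and observe $g$ restricts to a non-constant regular function on each), with the B\'ezout and resultant arguments as the same alternatives the paper alludes to. There is nothing to add.
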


%\begin{proof}
% Suppose $\gcd(f,g)=1$.
% We may assume that $g$ is not constant.
% Then the zero set $\{g=0\}$ is a finite union of irreducible algebraic curves $C$.
% The polynomial $f$ defines a regular map $f\colon C \to \C$.
% It follows from $\gcd(f,g)=1$ that $f(C)\neq\{0\}$.
% By Lemma\nobreakspace \ref {lem:dim1}, $f^{-1}(\{0\}) = \{(L, M) \in C \mid f(L, M)=0\}$ is a finite set and so is $\{f=g=0\}$.
% The converse is obvious.
%\end{proof}

Using the above lemmas and propositions, we prove the main theorem.

\begin{proof}[Proof of Theorem\nobreakspace \ref {thm:FiniteImage}]
 First note that $\gcd_{\Z[L,M]}(f,g) = \gcd_{\C[L,M]}(f,g)$ holds for $f, g \in \Z[L,M]$ up to multiplication by elements of $\C^\times$.
 By Lemma\nobreakspace \ref {lem:gcd1}, the intersection 
 $$
 \{(L,M) \in \C^2 \mid A_{K_1}(L,M)=A_{K_2}^T(L,M)=0\}
 $$
 of the algebraic curves defined by $A_{K_1}$ and $A_{K_2}^T$ is a finite set $A$.
 Let us prove that the image of $X(\Sigma(K_1,K_2)) \to X(E(K_i))$ is a finite set $X_i$ for $i=1,2$.
 Then Propositions\nobreakspace \ref {prop:Multiplicativity} and\nobreakspace  \ref {prop:AcyclicOnSplice} complete the proof.
 
 By the definition of the $A$-polynomial, $\theta\circ r_i(X_i) \subset A$.
 It follows from Lemma\nobreakspace \ref {lem:dim1} and the second condition in Theorem\nobreakspace \ref {thm:FiniteImage} that $r_i^{-1}(\theta^{-1}(A))$ is a finite set.
 Thus, $X_i$ is also a finite set.
% First note that $\gcd_{\Z[L,M]}(f,g) = \gcd_{\C[L,M]}(f,g)$ holds for $f, g \in \Z[L,M]$ up to multiplication by elements of $\C^\times$.
% By Lemma\nobreakspace \ref {lem:gcd1}, the intersection 
% $$\{(L,M) \in \C^2 \mid A_{K_1}(L,M)=A_{K_2}^T(L,M)=0\}$$
% of the algebraic curves defined by $A_{K_1}$ and $A_{K_2}^T$ is a finite set $\{(L_j, M_j) \mid j=1,\dots,r\}$.
% 
% If $L_jM_j \neq 0$, then we consider the inverse image $r_i^{-1}(\{\theta^{-1}(L_j,M_j)\})$ for $i=1, 2$.
% This is a finite subset of $X(E(K_i))$ by Lemma\nobreakspace \ref {lem:dim1} and the condition that $r_i(C)$ is not a point for every irreducible component $C$ of $X(E(K_i))$.
% It follows from Propositions\nobreakspace \ref {prop:Multiplicativity} and\nobreakspace  \ref {prop:AcyclicOnSplice} that $\{\tau_\rho(\Sigma(K_1,K_2)) \mid [\rho] \in X(\Sigma(K_1,K_2))\}$ is a finite set.
\end{proof}

We next prove Corollary\nobreakspace \ref {cor:FiniteImage_TwoBridge}.
Let $K$ be a $2$-bridge knot. 
Take and fix a presentation of $\pi_1(E(K))$ and write $\phi(s,t)$ to its Riley polynomial \textup{(}see Section\nobreakspace \ref {sec:Computation}\textup{)}.
%$$\ang{x,y \mid wx=yw}\ \text{where $w=y^{\varepsilon_1}x^{\varepsilon_2} y^{\varepsilon_3}x^{\varepsilon_4} \dotsm y^{\varepsilon_{2n-1}}x^{\varepsilon_{2n}}$}$$
%for some $\varepsilon_1,\dots,\varepsilon_{2n} \in \{1,-1\}$.
Then the following lemma is a consequence of \cite[Lemma~2]{Ril84}.

\begin{lemma}
 \label{lem:Riley}
 The coefficient of the leading term of $\phi(s,t) \in \Z[s^{\pm 1}, t]$ with respect to $t$ is a monomial of $s$.
\end{lemma}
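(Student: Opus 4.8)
The plan is to recall the structure of a $2$-bridge knot group and the definition of the Riley polynomial, and then trace through how $A_K(L,M)$ is recovered from $\phi(s,t)$. For a $2$-bridge knot $K$, the group $\pi_1(E(K))$ has a presentation $\langle a, b \mid wa = bw\rangle$ with two meridian generators $a, b$; a representation into $\SL(2,\C)$ is determined up to conjugacy by sending $a \mapsto \bigl(\begin{smallmatrix} s & 1 \\ 0 & s^{-1}\end{smallmatrix}\bigr)$ and $b \mapsto \bigl(\begin{smallmatrix} s & 0 \\ -t & s^{-1}\end{smallmatrix}\bigr)$, where $s$ is the eigenvalue of the meridian (so $M = s$) and $t$ is Riley's variable. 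Riley's polynomial $\phi(s,t) \in \Z[s^{\pm1}, t]$ is then the single equation in $s, t$ cutting out the irreducible characters, obtained as (essentially) the $(1,1)$- or $(1,2)$-entry of the matrix $W$ representing the word $w$; its degree in $t$ equals the number of irreducible characters for generic fixed $s$. The key point I would invoke is \cite[Lemma~2]{Ril84}: Riley proved that $\phi$, viewed as a polynomial in $t$, has degree $(n-1)/2$ (where $2/n$ or $\beta/\alpha$ is the fraction of the $2$-bridge knot) and, crucially, that its \emph{leading} coefficient in $t$ is a unit times a power of $s$ — this is exactly the statement to be proved.

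First I would set up notation: fix the standard parabolic-type Riley representation above, let $W = W(s,t)$ denote the image of the word $w$, and recall $\phi(s,t) := W_{11}$ (up to the normalization in the chosen reference). Then I would expand $W$ as a product of the matrices $\rho(a)^{\pm1}$ and $\rho(b)^{\pm1}$ and track the highest power of $t$. Each factor $\rho(b)^{\pm1}$ contributes at most one power of $t$ (in its lower-left entry), while $\rho(a)^{\pm1}$ contributes none; so the $t$-degree of any entry of $W$ is bounded by the number of $b^{\pm1}$ occurrences in $w$, and the genuinely top-degree contribution comes from the unique way of picking the $t$-carrying entry from each relevant factor. The combinatorics of the continued-fraction word $w$ for a $2$-bridge knot forces this top term to survive (no cancellation), and inspecting which matrix entries are multiplied shows the accompanying coefficient is a product of the diagonal entries $s^{\pm1}$ — hence a monomial in $s$. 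I would then simply cite \cite[Lemma~2]{Ril84} for this last assertion rather than redo Riley's induction, since the lemma is explicitly what we need.

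The main obstacle — and the reason the cleanest route is to quote Riley — is verifying the \emph{non-cancellation} of the leading $t$-term: a priori, different ways of distributing the $t$-factors across the word could produce lower-order contributions that interfere, and one must know the specific alternating-sign structure of the $2$-bridge relator word to see that the unique maximal-degree monomial is not killed. Riley handled precisely this via an inductive formula for the partial products along $w$, so I would state the proof as: ``By the recursion in \cite[Lemma~2]{Ril84}, $\phi(s,t) = c\,s^{k}t^{(n-1)/2} + (\text{lower order in }t)$ for some $c \in \{\pm1\}$ and $k \in \Z$, which is the claim.'' If a self-contained argument were wanted instead, I would prove the recursion by writing $w = w' a^{\varepsilon} b^{\delta}$ or using the palindromic structure of $w$, showing the leading coefficient transforms by multiplication by a single diagonal entry $s^{\pm1}$ at each step, so that by induction it stays a monomial in $s$ throughout.
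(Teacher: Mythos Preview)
Your proposal is correct and matches the paper's own treatment: the paper simply records that the lemma is a consequence of \cite[Lemma~2]{Ril84} without giving any further argument, and your write-up likewise reduces the claim to that citation (with some added heuristic explanation of Riley's recursion that the paper omits).
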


\begin{proof}[Proof of Corollary\nobreakspace \ref {cor:FiniteImage_TwoBridge}]
 It suffices to check that any pair of 2-bridge knots $K_1$ and $K_2$ satisfies the conditions in Theorem\nobreakspace \ref {thm:FiniteImage}.
 First, Corollary\nobreakspace \ref {cor:gcd1_TwoBridge} implies $\gcd(A_{K_1}(L,M), A_{K_2}(M,L)) = 1$.
 Let $C$ be an irreducible component of $X(E(K_i))$.
 
 If $C$ consists of reducible representations, then $\dim C = 1$ and $r_i(C) \subset X(\partial E(K_i))$ is not a point.
 Otherwise, $C$ is described by an irreducible factor of the Riley polynomial of $K_i$, and hence $\dim C = 1$.
 Assume that $r_i(C)$ is a point $\theta^{-1}(L,M)$.
 Then the function $\tr\rho(\mu)$ is the constant $M+M^{-1}$ on $C$, and thus $s-M \mid \phi(s,t)$.
 %$\phi(s,t)$ must be of the form $\psi(s)\phi_0(s,t)$, where $\psi(M)=0$.
 Since $M \neq 0$, this contradicts Lemma\nobreakspace \ref {lem:Riley}.
\end{proof}

%\begin{remark}
% It is known that we can choose $\varepsilon_i$'s so that $\varepsilon_i = \varepsilon_{2n-i+1}$.
% Hence, the proof of Lemma\nobreakspace \ref {lem:Riley} implies that $\phi(s,t)$ is a monic (up to sign) polynomial of degree $n-1$ with respect to $t$.
%\end{remark}

We put the following problem.
\begin{problem}
When $\RT(\Sigma(K_1,K_2))$ is an infinite set?
Or is it always a finite set?
\end{problem}

Here we give an observation when $\dim C >1$ in Theorem\nobreakspace \ref {thm:FiniteImage}. 

%%%%%%%%%%%%%%%
\begin{example}
\label{example:Montesinos}

Let $K$ be the Montesinos knot $M(1/3,1/3,1/3,1/3,1/2)$ (see Figure\nobreakspace \ref {fig:PaPo13}).
Then $\pi_1(E(K))$ has the presentation
\[
\ang{
\mu_1,\dots,\mu_5 \biggm| \parbox{23em}{$
\mu_{i}\mu_{i+1}^{-1}\mu_{i}^{-1}
\mu_{i+1}
\mu_{i}^{-1}=\mu_{i+1}\mu_{i+2}^{-1}\mu_{i+1}\mu_{i+2}^{-1}\mu_{i+1}^{-1}\mu_{i+2}\mu_{i+1}^{-1} \\
\mu_4\mu_5^{-1}\mu_4^{-1}\mu_5\mu_4^{-1}=\mu_5\mu_1^{-1}\mu_5\mu_1\mu_5^{-1} 
\quad (i=1,2,3)
$}
}.
\]
Note that $\mu_1$ is conjugate to $\mu_2^{-1}$, $\mu_3$, $\mu_4^{-1}$ and $\mu_5$.
It follows from \cite[Theorem~1]{PaPo13} that there is an irreducible component of $X(E(K))$ with $\mathrm{dim} \geq 2$.
In fact, we construct a 2-parameter family $C$ of representations by a bending (see Section~\ref{sec:Computation}) along the sphere $S$ intersecting $K$ at 4 points illustrated in Figure~\ref{fig:PaPo13}. 

For $s \in \C^\times\setminus\{1\}$, we first define the representation 
$\rho_s\colon \pi_1(E(K)) \to \SL(2,\C)$ by 
$\rho_s(\mu_j)=
\begin{pmatrix}
 s^{-1} & 0 \\
 s^2-1+s^{-2} & s
\end{pmatrix}$
if $j=1,3,5$ and $\rho_s(\mu_j)=
\begin{pmatrix}
 s & 1 \\
 0 & s^{-1}
\end{pmatrix}$
if $j=2,4$.
Note that $\rho_s$ factors through 
$\pi_1(E(\bar{3}_1))=\ang{ x,y \mid xyx=yxy }$, 
where $\bar{3}_1$ denotes the right-handed trefoil knot. 
That is, $\rho_s$ comes from 
$\bar{\rho}_s\colon \pi_1(E(\bar{3}_1)) \to \SL(2,\C)$ by 
$\bar{\rho}_s(x)=\rho_s(\mu_j)$ if $j=1,3,5$ 
and 
$\bar{\rho}_s(y)=\rho_s(\mu_j)^{-1}$ if $j=2,4$. 
%homomorphism $\phi\colon \pi_1(E(K)) \twoheadrightarrow =\ang{x, y \mid xyx=yxy}$ by $\phi(\mu_j)=x$  and $\phi(\mu_j)=y^{-1}$
Here we have $\rho_s(\mu_1')=\rho_s(\mu_1)$ since $\rho_s(\mu_1)=\rho_s(\mu_5)$.
By finding the tangle enclosed by the dotted circle drawn in Figure~\ref{fig:PaPo13} which is a part of $\bar{3}_1$, 
we can see that the restriction of $\rho_s$ to $\pi_1(S\setminus K)$ is invariant under conjugation by
\[
P_u=
\begin{pmatrix}
 \left(\frac{s^2-1+s^{-2}}{u}\right)^{1/2} & \frac{\left(\frac{s^2-1+s^{-2}}{u}\right)^{1/2} - \left(\frac{s^2-1+s^{-2}}{u}\right)^{-1/2}}{s-s^{-1}} \\
 0 & \left(\frac{s^2-1+s^{-2}}{u}\right)^{-1/2}
\end{pmatrix},
\]
where $u \in \C^\times$ (see Lemma~\ref{lem:Aform}).
Therefore, one obtains representations $\rho_{s,u}\colon \pi_1(E(K)) \to \SL(2,\C)$ by
\[\rho_{s,u}(\mu_j)=
\begin{cases}
 \rho_s(\mu_j) & \text{if $j=3$,} \\
 P_u\rho_s(\mu_j)P_u^{-1} & \text{if $j=1,2,4,5$.}
\end{cases}
\]

%we define a homomorphism $\rho_{s,u}\colon \pi_1(E(K)) \to \SL(2,\C)$ by
%\begin{align*}
%\rho_{s,u}(\mu_1) &= \rho_{s,u}(\mu_5) =
%\begin{pmatrix}
% \frac{s^2-u}{s-s^{-1}} & \frac{(u-1)(s^2-u-1+s^{-2})}{(s-s^{-1})^{2}u} \\
% u & \frac{u-s^{-2}}{s-s^{-1}}
%\end{pmatrix},
%\\
%\rho_{s,u}(\mu_2) &= \rho_{s,u}(\mu_4) =
%\begin{pmatrix}
% s & 1 \\
% 0 & s^{-1}
%\end{pmatrix},\ 
%\rho_{s,u}(\mu_3) =
%\begin{pmatrix}
% s^{-1} & 0 \\
% s^2-1+s^{-2} & s
%\end{pmatrix}.
%\end{align*}
By the above bending construction, the set $C=\{\rho_{s,u}\}$ is still a 2-parameter family in $X(E(K))$. 
% since the traces of $\rho_{s,u}(\mu_2)$ and $\rho_{s,u}(\mu_1\mu_3)$ are
%$$
%s+s^{-1}\ \text{and}\ 
%\frac{-s^4u^2+(s^8+1)u-s^8+2s^6-3s^4+2s^2-1}{s^2(s^2-1)^2u},
%$$
%respectively. 
%On the other hand, 
%$\tau_{\rho_{s,u}}(E(K))$ is independent of $u$ by direct computation. 
Now one can also check it directly 
%}
%we have
$$
\tau_{\rho_{s,u}}(E(K)) %= \frac{\det A_1}{\det(X_1 - I_2)}
 = \frac{144s^{-4}(s-1)^8}{-s^{-1}(s-1)^2} = -144(\tr\rho_{s,u}(\mu_2) -2)^{3},
$$
%where $\mu=\mu_2$, 
and hence $\tau_{\rho_{s,u}}(E(K))$ depends only on $\tr\rho_{s,u}(\mu_2) = s+s^{-1}$.

On the other hand, to be independent of $u$, 
it can be explained by the generalized multiplicativity of the Reidemeister torsion 
to the decomposition 
$E(K) = M_1 \cup_{S_0} M_2$ along the surface $S_0 = S\cap E(K)$ with 4 boundary components. 
Although $M_1$, $M_2$ and $S_0$ are not acyclic, 
after fixing suitable bases of $H_1(M_1;\C^2_{\rho_{s,u}})$, $H_1(M_2;\C^2_{\rho_{s,u}})$ 
and $H_1(S_0;\C^2_{\rho_{s,u}})$, 
we obtain 
$\tau_{\rho_{s,u}}(E(K)) = \tau_{\rho_{s,u}}(M_1)\tau_{\rho_{s,u}}(M_2)/\tau_{\rho_{s,u}}(S_0)$. 
By the construction of $\rho_{s,u}$, 
we see that the value of the right-hand side is independent of $u$. 

%$\lambda = \mu_1^{-1}\mu_2'\mu_5\mu_1\mu_2\mu_1^{-1}\mu_3'^{-1}\mu_2\mu_3\mu_4\mu_3^{-1}\mu_5'^{-1}\mu_4\mu_1'\mu_4\mu_5^{-1}\mu_4^{-1}\mu_3^{-1}\mu_4'\mu_2\mu_3^{-1}\mu_2^{-11}$

Let $RT_C$ be the subset of $\RT(\Sigma(K,K))$ consisting of $\tau_\rho(\Sigma(K,K))$'s where the restriction of $\rho$ to each $E(K)$ belongs to $C$.
Then $RT_C$ is a finite set even though $C \subset X(\Sigma(K,K))$ is 2-dimensional.
Indeed, one can check that $\tr\rho_{s,u}(\lambda) = s^{24}+s^{-24}$, and thus there are finitely many solutions $(s_1,s_2)$ of $\tr\rho_{s_1,u_1}(\mu) = \tr\rho_{s_2,u_2}(\lambda)$ and $\tr\rho_{s_1,u_1}(\lambda) = \tr\rho_{s_2,u_2}(\mu)$.
We conclude that there are finitely many possibilities of the value $\tau_\rho(\Sigma(K,K)) = \tau_{\rho_{s_1,u_1}}(E(K))\tau_{\rho_{s_2,u_2}}(E(K))$.
\end{example}
%%%%%%%%%%%%%%%%%%%%

\begin{problem}
Can we relax the assumption ``either $\dim C = 0$, or $\dim C = 1$ and its image under the map $X(E(K_i)) \to X(\partial E(K_i))$ is not a point'' in Theorem\nobreakspace \ref {thm:FiniteImage}?
\end{problem}

\begin{figure}
 \centering
 \includegraphics[width=0.6\textwidth]{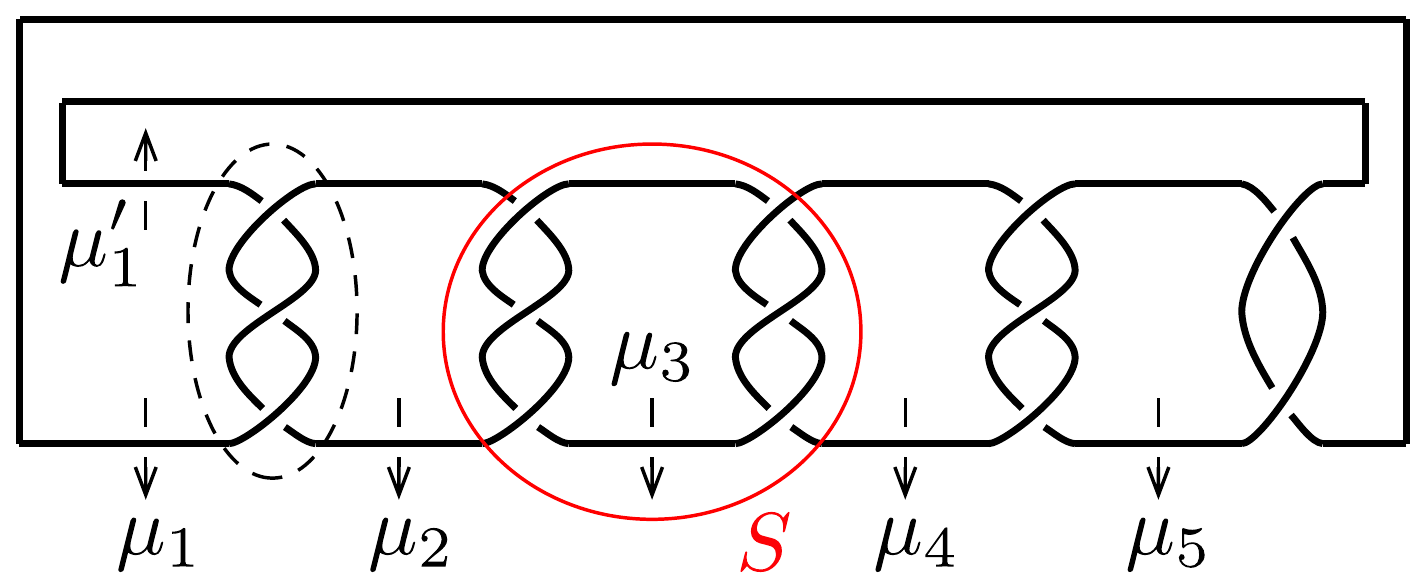}
 \caption{The Montesinos knot $K=M(1/3,1/3,1/3,1/3,1/2)$. %and generators $\mu_1,\dots,\mu_5$ of $\pi_1(E(K))$.
 }
 \label{fig:PaPo13}
\end{figure}

%%%%%%%%%%%%%%%%%%%%%%%%%%%%%%%%
\section{Computational observation}
\label{sec:Computation}

\subsection{Concrete description of $X^\irr(\Sigma(K_1,K_2))$} 

In this section we observe some examples of splices.
We use Mathematica to compute matrices.
Recall that $X^\irr(M)$ is identified with $R^\irr(M)/\SL(2,\C)$. 
The construction of deformations of a representation used in this section is called a \emph{bending construction} or simply a \emph{bending}. 
See \cite{JoMi87, Mot88} as a reference. 

Here we compute $X^\irr(\Sigma(K,K))$ for the trefoil knot and the figure-eight knot $K$ by using a presentation of a twist knot. 
Let $J(2,2q)$ be a twist knot where $q$ is a nonzero integer. 
Please see \cite{Mor08} as a reference for twist knots. 

A presentation of $\pi_1(E(J(2,2q)))$ is given as 
\[
\pi_1(E(J(2,2q)))= \ang{x, y\mid z^q x=y z^q},\ z=[y,x^{-1}]
\]
We take a representation $\rho\colon \ang{x, y} \to \SL(2,\C)$ 
from the free group $\ang{x, y}$ in $\SL(2,\C)$ by the correspondence 
\[
\rho(x)=\begin{pmatrix}
s & 1\\
0 & 1/s
\end{pmatrix},\ 
\rho(y)=\begin{pmatrix}
s & 0\\
-t & 1/s
\end{pmatrix}
\quad(s,t \in \C^\times).
\]

We use a small letter for a group element and its capital letter for the image of a small letter, like $X$ for $\rho(x)$. 
For $\rho(z^q)=Z^q$, 
we put the matrix $Z^q=\begin{pmatrix} z_{11} & z_{12}\\z_{21} & z_{22}\end{pmatrix}$. 

We define the Riley polynomial to be $\phi_q(s,t)=z_{11}+(1/s-s)z_{12}$. 
It can be checked that the previous representation gives an irreducible representation of $\pi_1(E(J(2,2q)))$ in $\SL(2,\C)$ if and only if $(s,t)$ satisfies $\phi_q(s,t)=0$. 

It is seen that 
any $[\rho]\in X(E(J(2,2q)))$ can be parametrized by 
\[
\xi=\tr\rho(x) = \tr\rho(y) =s+1/s,
\]
\[
\tr\rho(xy) = s^2+1/s^2-t = (s+1/s)^2-t-2=\xi^2-t-2,
\]
and then by $\xi$ and $t$. 

Here we take other words $\tilde{z}=[x,y^{-1}]$ and $\lambda=\tilde{z}^q z^q$. 
This $x$ gives a meridian of $J(2,2q)$ and this $\lambda$ does the corresponding longitude for $x$. 
Here $\lambda$ is homologically trivial. 
Therefore $\ang{x, \lambda}$ is the free abelian group of rank $2$ and $\rho(x)=X$ commutes with $\rho(\lambda)=L$. 
We can find another matrix which commutes with $X$ and $L$ by direct computations. 

\begin{lemma}
\label{lem:Aform}
Any matrix $A$ which commutes with 
$X=\begin{pmatrix} s & c^2 \\
 0 & 1/s  
\end{pmatrix}
\ (s\neq \pm 1,~c\neq 0)$ has a form of 
\[
A=\begin{pmatrix}
a & \frac{a-1/a}{s-1/s} c^2\\
 0 & 1/a \\
\end{pmatrix}
\]
for some $a \in \C^\times$.
\end{lemma}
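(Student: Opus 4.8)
The plan is to prove the lemma by a direct computation of the commutator condition $AX = XA$, exploiting the fact that $X$ is upper triangular of an explicitly known shape. (Here $A$ is understood to lie in $\SL(2,\C)$, as the claimed form has determinant $1$.)

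First I would write $A = \begin{pmatrix} a & b \\ g & h \end{pmatrix}$ with $a,b,g,h \in \C$ and $ah - bg = 1$, and compute both products
\[
AX = \begin{pmatrix} as & ac^2 + b/s \\ gs & gc^2 + h/s \end{pmatrix}, \qquad XA = \begin{pmatrix} sa + c^2 g & sb + c^2 h \\ g/s & h/s \end{pmatrix}.
\]
Comparing the $(1,1)$-entries gives $c^2 g = 0$, hence $g = 0$ since $c \neq 0$; with $g = 0$ the $(2,1)$- and $(2,2)$-entries then agree automatically. (Equivalently, the $(2,1)$-comparison reads $g(s - 1/s) = 0$, and $s - 1/s \neq 0$ precisely because $s \neq \pm 1$.) Comparing the $(1,2)$-entries gives $c^2(a - h) = b(s - 1/s)$, and since $s - 1/s \neq 0$ we may solve $b = \dfrac{(a-h)c^2}{s - 1/s}$. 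Thus every $A$ commuting with $X$ is upper triangular of the form $\begin{pmatrix} a & \frac{(a-h)c^2}{s-1/s} \\ 0 & h \end{pmatrix}$. Finally, $\det A = ah = 1$ forces $a \neq 0$ and $h = 1/a$, so $a - h = a - 1/a$, and substituting yields exactly the asserted matrix.

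Alternatively, one can bypass the entrywise computation: since $s \neq \pm 1$, the eigenvalues $s$ and $1/s$ of $X$ are distinct, so the centralizer of $X$ in $M_2(\C)$ is the $2$-dimensional algebra $\C[X] = \{pI_2 + qX : p,q \in \C\}$; writing $a = p + qs$ and $h = p + q/s$ one recovers $q = (a-h)/(s-1/s)$ and the off-diagonal entry $qc^2$, and the determinant-one condition again gives $h = 1/a$.

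There is essentially no hard step here; the only thing to watch is the bookkeeping of where the two hypotheses enter — $c \neq 0$ is what forces the lower-left entry to vanish, while $s \neq \pm 1$ (equivalently $s - 1/s \neq 0$) is what makes the $(1,2)$-equation solvable for $b$ and, in the alternative argument, what guarantees that $X$ has distinct eigenvalues. It is also worth recording explicitly that $A \in \SL(2,\C)$ is being used, so that "any matrix $A$" is read within $\SL(2,\C)$.
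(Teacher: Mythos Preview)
Your proof is correct and is precisely the direct computation the paper alludes to; the paper itself states the lemma without a formal proof, merely prefacing it with the remark that such a matrix ``can be found by direct computations.'' Your entrywise verification and the alternative centralizer argument via $\C[X]$ both fill in exactly what was left implicit.
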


Now we consider two copies $K_1,K_2$ of $J(2,2q)$ and 
\[
\pi_1(E(K_1))=\ang{x_1,y_1\mid z_1^q x_1=y_1 z_1^q},\ z_1=[y_1,x_1^{-1}],
\]
\[
\pi_1(E(K_2))=\ang{x_2,y_2\mid z_2^q x_2=y_2 z_2^q},\ z_2=[y_2,x_2^{-1}].
\]
Further 
\[
\begin{split}
\pi_1(\Sigma(K_1,K_2))
&=\pi_1(E(K_1))\ast_{\pi_1(T^2)}\pi_1(E(K_2))\\
&=\ang{x_1,y_1,x_2,y_2 \mid z_1^q x_1=y_1 z_1^q,\ z_2^q x_2=y_2 z_2^q,\ x_1=\lambda_2,\ \lambda_1=x_2}.
\end{split}
\]

We consider an irreducible representation $\rho\colon \pi_1(\Sigma(K_1,K_2)) \to \SL(2,\C)$. 
Up to conjugate, we can set that 
\[
X_1=\rho(x_1)=\begin{pmatrix}
s_1 & 1\\
0 & 1/s_1
\end{pmatrix},\ 
Y_1=\rho(y_1)=\begin{pmatrix}
s_1 & 0\\
-t_1 & 1/s_1
\end{pmatrix}.
\]

First note that we treat cases of $s_1\neq \pm 1$.
Further we may assume that 
$X_2$ is conjugate to $\begin{pmatrix}
s_2 & 1\\
0 & 1/s_2
\end{pmatrix}$, 
and $Y_2$ to $\begin{pmatrix}
s_2 & 0\\
-t_2 & 1/s_2
\end{pmatrix}$ simultaneously, 
as 
\[
X_2=H\begin{pmatrix}
s_2 & 1\\
0 & 1/s_2
\end{pmatrix}H^{-1},\ 
Y_2=H\begin{pmatrix}
s_2 & 0\\
-t_2 & 1/s_2
\end{pmatrix}H^{-1}
\]
for some $H\in\SL(2,\C)$. 

Here we require the conditions 
\[
X_1=L_2,\ L_1=X_2\] 
to get a representation on $\pi_1(\Sigma(K_1,K_2))$. 
It can be seen that $L_1$ is an upper triangular matrix and then $X_2$ is also an upper triangular matrix. 
By taking 
\[
H=\begin{pmatrix}
c & 0\\
0 & 1/c
\end{pmatrix}\ (c\neq 0),
\]
one has
\[
\begin{split}
X_2&=H\begin{pmatrix}
s_2 & 1\\
0 & 1/s_2
\end{pmatrix}H^{-1}=\begin{pmatrix}
s_2 & c^2\\
0 & 1/s_2
\end{pmatrix},\\ 
Y_2&=H\begin{pmatrix}
s_2 & 0\\
-t_2 & 1/s_2
\end{pmatrix}H^{-1}
=\begin{pmatrix}
s_2 & 0\\
-t_2/c^2 & 1/s_2
\end{pmatrix}.
\end{split}
\]

Here $L_2$ is also an upper triangular matrix and $L_2=X_1$.
Now any $[\rho]=[\rho_1 \ast\rho_2]\in X(\Sigma(K_1,K_2))$ is corresponding to 
$(X_1,Y_1,X_2,Y_2)=(X_1,Y_1,L_1,Y_2)$ 
of the above forms. 
For $a \in \C^\times$ we define $A_a$ by
\[
A_a=\begin{pmatrix}
a & \frac{a-1/a}{s_1-1/s_1} \\
 0 & 1/a \\
\end{pmatrix}
%,\ 
%B_b=\begin{pmatrix}
%b & \frac{b-1/b}{s_2-1/s_2} c^2\\
 %0 & 1/b\\
%\end{pmatrix},
\]
and now consider deformations $[\rho_a]=[(A_a\rho_1 A_a^{-1})\ast\rho_2]$ of $[\rho]=[\rho_1 \ast\rho_2]$ as
%}
\[
(A_a X_1 A_a^{-1},A_a Y_1 A_a^{-1},X_2,Y_2 )
=(X_1,A_a Y_1 A_a^{-1},X_2, Y_2 ).
\]

\begin{lemma}
It holds that $A_a L_1 A_a^{-1}=L_1$.% and $B_b L_2 B_b^{-1}=L_2$. 
\end{lemma}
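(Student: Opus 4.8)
The plan is to recognise $L_1$ as a member of the one-parameter family $\{A_a \mid a \in \C^\times\}$ and then exploit the fact that this family is abelian. As already observed in the text, $x_1$ and $\lambda_1 = \tilde z_1^{\,q} z_1^{\,q}$ generate a rank-two free abelian subgroup of $\pi_1(E(K_1))$, so $X_1 = \rho(x_1)$ commutes with $L_1 = \rho(\lambda_1)$. Since we are in the case $s_1 \neq \pm 1$, Lemma~\ref{lem:Aform} applies verbatim (with $c = 1$, matching $X_1 = \begin{pmatrix} s_1 & 1 \\ 0 & 1/s_1 \end{pmatrix}$): every matrix commuting with $X_1$ has the form $A_\ell = \begin{pmatrix} \ell & \frac{\ell - 1/\ell}{s_1 - 1/s_1} \\ 0 & 1/\ell \end{pmatrix}$ for some $\ell \in \C^\times$. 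Hence $L_1 = A_\ell$ for some $\ell$ (the possibilities $L_1 = \pm I_2$ correspond to $\ell = \pm 1$).

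Next I would record the group law of the family. A one-line multiplication of upper-triangular matrices shows $A_a A_b = A_{ab}$ for all $a,b \in \C^\times$; consequently the matrices $A_a$ pairwise commute, $A_1 = I_2$, and $A_a^{-1} = A_{1/a}$. Writing $L_1 = A_\ell$, this gives
$A_a L_1 A_a^{-1} = A_a A_\ell A_{1/a} = A_{a\ell a^{-1}} = A_\ell = L_1$,
which is exactly the claim (and this is precisely what is needed for the bending $[\rho_a]$ to again satisfy the amalgamation condition $L_1 = X_2$, since $X_2$ is left unchanged).

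I do not expect a genuine obstacle. The only computational content is the identity $A_a A_b = A_{ab}$ — i.e.\ that the off-diagonal entry $\frac{a - 1/a}{s_1 - 1/s_1}$ was chosen precisely so that $\{A_a\}$ is a subgroup, namely the maximal torus through $X_1$ — and this is routine bookkeeping. If one wishes to avoid even that, it suffices to note that an element of $\SL(2,\C)$ with distinct eigenvalues, such as $X_1$ when $s_1 \neq \pm 1$, has abelian centraliser, and that both $A_a$ and $L_1$ centralise $X_1$; commutativity, hence $A_a L_1 A_a^{-1} = L_1$, is then immediate.
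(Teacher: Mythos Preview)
Your argument is correct. Both you and the paper reduce the claim to the fact that the centraliser of $X_1$ in $\SL(2,\C)$ is abelian when $s_1\neq\pm 1$: the paper phrases this as ``$X_1$, $L_1$, $A_a$ share eigenvectors, hence are simultaneously diagonalisable,'' while you phrase it as ``$L_1$ lies in the one-parameter group $\{A_a\}$ by Lemma~\ref{lem:Aform}, and $A_aA_b=A_{ab}$.'' Your closing remark about abelian centralisers is exactly the paper's argument in compressed form, so the two proofs are essentially the same.
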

\begin{proof}
We prove $A_a L_1 A_a^{-1}=L_1$. 
We may assume $s\neq \pm 1$. 
Here we take eigenvectors 
${\bf u}_1=\begin{pmatrix}1\\ 0\end{pmatrix},{\bf u}_2\in\C^2$ for $X_1$ 
such that $X_1{\bf u}_1=s_1 {\bf u}_1,X_1{\bf u}_2=s_1^{-1}{\bf u}_2$.  
Since $X_1 L_1=L_1 X_1$, 
one has 
\[
\begin{split}
X_1 L_1 {\bf u}_2&=L_1 X_1 {\bf u}_2\\
&=L_1 s_1^{-1}{\bf u}_2\\
&=s_1^{-1}L_1{\bf u}_2.
\end{split}
\]
Hence there exists a nonzero constant $\gamma$ such that $L_1{\bf u}_2=\gamma {\bf u}_2$. 
This means that $L_1$ has also ${\bf u}_1,{\bf u}_2\in\C^2$ as eigenvectors. 
By similar arguments for $A_a X_1=X_1 A_a$, 
one sees $A_a$ has ${\bf u}_1,{\bf u}_2\in\C^2$ as eigenvectors. 
Therefore it is seen that $X_1,L_1,A_a$ are simultaneously diagonalizable and in particular $A_a L_1 A_a^{-1}=L_1$. 
\end{proof}

By the above lemma, one can see 
$A_a \rho_1 A_a^{-1}=\rho_1$ 
on the subgroup $\pi_1(T^2)$ generated by $\{x_1,l_1\}=\{x_2,l_2\}$ 
and then 
$\rho_{a}=(A_a \rho_1 A_a^{-1})\ast \rho_2 $ gives an irreducible representation of $\pi_1(\Sigma(K_1,K_2))$. 

Further if $a\neq 1$, then 
$A_a Y_1 A_a^{-1}\neq Y_1$. 
This implies
$
\rho_{a}\neq \rho
\in R(\Sigma(K_1,K_2))  .
$
%and 
%\[
%\rho_{a,b}\neq \rho_{1,1}
%\in X(\Sigma(K_1,K_2)) .
%\]
It can be seen by the following computations. 
First one sees that
\[
\begin{split}
%\tr(\rho_1\ast\rho_2(x_1 y_2))
%&=
%\tr(X_1 Y_2)\\
%&=s_1s_2+\frac{1}{s_1 s_2}-\frac{t_2}{c^2}\\
\tr(\rho_1\ast\rho_2(y_1 x_2 ))
&
=\tr(Y_1 X_2 )\\
&=s_1s_2+\frac{1}{s_1 s_2}-c^2 t_1.
\end{split}
\]

On the other hand,  one sees that 
%\[
%\begin{split}
%& \tr\left((A_a \rho_1 A_a^{-1})\ast (B_b \rho_2 B_b^{-1})(x_1 y_2)\right) \\
%=&\tr(X_1 B_b Y_2 B_b^{-1}) \\
%=& s_1 s_2+\frac{1}{s_1 s_2}-\frac{1}{b^2 c^2}t_2+\frac{(1-b^2)(s_1-1/s_1)}{b^2(s_2-1/s_2)}t_2,
%\end{split}
%\]
\[
\begin{split}
& \tr\left((A_a\rho_{1}A_a^{-1})\ast \rho_{2})(y_1 x_2 )\right) \\
=& \tr(A_a Y_1 A_a^{-1} X_2 ) \\
=& s_1 s_2 + \frac{1}{s_1 s_2}
%- \frac{(\frac{1}{a^2} - 1)}{(s_1 -\frac{1}{s_1})} t1/( s_2)
 +
 \left\{
 \frac{ (s_2 -\frac{1}{s_2})}{(s_1 -\frac{1}{s_1})}  \left(\frac{1}{a^2}-1\right) - \frac{c^2}{a^2}
 \right\}
 t_1.
% s_1 s_2+\frac{1}{s_1 s_2}-\frac{c^2}{a^2}t_1+\frac{(1-a^2)(s_2-1/s_2)}{b^2(s_1-1/s_1)}t_1.\\
\end{split}
\]
%These maps can vary independently on $a$ and $b$. 
Therefore we can find one character
\[
[\rho]\mapsto\tr\rho(y_1 x_2 )
\]
which is not constant on $X(\Sigma(K_1,K_2))$ and 
we know $X(\Sigma(K_1,K_2))$ has at least one dimension near $[\rho]$. 

%%%%%%%%%%%%%%%%%%%%%%%
%%%%%%%%%%%%%%%%%%%%%%

\begin{proposition}
$X(\Sigma(K_1,K_2))$ has just one dimension near $[\rho]$.
\end{proposition}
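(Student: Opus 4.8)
The plan is to show that the tangent space to $X(\Sigma(K_1,K_2))$ at $[\rho]$ is at most one-dimensional, which combined with the preceding paragraph (where a non-constant character was exhibited near $[\rho]$) pins down the local dimension exactly. First I would fix the explicit presentation
\[
\pi_1(\Sigma(K_1,K_2)) = \ang{x_1,y_1,x_2,y_2 \mid z_1^q x_1 = y_1 z_1^q,\ z_2^q x_2 = y_2 z_2^q,\ x_1 = \lambda_2,\ \lambda_1 = x_2}
\]
and compute the dimension of the Zariski tangent space $Z^1(\pi_1(\Sigma(K_1,K_2));\liesl(2,\C)_{\Ad\rho})$, or rather the tangent space to the character variety, which is $H^1$ modulo the coboundaries $B^1$; since $\rho$ is irreducible, $\dim B^1 = 3$, so it suffices to bound $\dim H^1(\pi_1(\Sigma(K_1,K_2));\liesl(2,\C)_{\Ad\rho}) \le 4$ and then subtract. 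Concretely, I would set up the deformation: a nearby representation is determined by perturbing $X_1,Y_1$ within the curve $\{\phi_q(s_1,t_1)=0\}$, perturbing $s_2,t_2$ within $\{\phi_q(s_2,t_2)=0\}$, perturbing the conjugating parameter $c$, and perturbing by the bending parameter $a$; the two gluing equations $X_1 = L_2$ and $L_1 = X_2$ then cut this down. Each of $\{\phi_q=0\}$ is a curve, so $s_1$ (equivalently $\xi_1$) and $s_2$ are each one free parameter; $c$ and $a$ give two more, for four parameters before imposing the gluing constraints.

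Next I would analyze the two matrix equations $X_1 = L_2$ and $L_1 = X_2$ as equations among these parameters. The key structural input, already established in the excerpt via Lemma~\ref{lem:Aform} and the diagonalizability lemma, is that $X_1$, $L_1$, and $A_a$ are simultaneously (upper-)triangularizable, and likewise on the $K_2$ side; so the gluing equations become essentially scalar equations on the eigenvalues together with one equation on the off-diagonal entries. The eigenvalue of $L_1 = \rho(\lambda_1)$ is a polynomial (in fact, as noted for the trefoil/figure-eight cases, something like $s_1^{\pm 6}$ or a comparable expression) in $s_1$ and $t_1$, hence a function of $\xi_1$ alone on the curve $\{\phi_q=0\}$; similarly for $L_2$. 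The equation $X_1 = L_2$ forces $s_1$ (the eigenvalue of $X_1$) to equal the eigenvalue of $L_2$, a function of $s_2$; and $L_1 = X_2$ forces the eigenvalue of $L_1$, a function of $s_1$, to equal $s_2$. These two scalar equations generically reduce the count $(s_1,s_2)$ by two, leaving $c$ and $a$; but matching the off-diagonal entries of $X_1 = L_2$ and $L_1 = X_2$ should then express $c$ in terms of the remaining data, and the bending parameter $a$ is precisely the residual one-dimensional freedom. So I would carefully verify that after imposing both full matrix equations, exactly one of the four parameters survives, namely (up to the action) $a$.

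The main obstacle I anticipate is making the count of four parameters down to one rigorous rather than heuristic: one must confirm that the gluing equations are genuinely independent at $[\rho]$, i.e. that the Jacobian of the four scalar/entry conditions in the four parameters $(s_1,s_2,c,a)$ has rank exactly three at the point in question, and not less. This requires knowing that the derivatives of the longitude-eigenvalue functions $s_i \mapsto (\text{eigenvalue of } L_i)$ along the Riley curve are nonzero at $[\rho]$ — equivalently, that the map $X(E(K_i)) \to X(\partial E(K_i))$ is a local immersion at the relevant character, which is exactly the hypothesis ``$r_i(C)$ is not a point'' from Theorem~\ref{thm:FiniteImage} reappearing in a quantitative form. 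For the trefoil and figure-eight this can be checked by direct computation (e.g. in Mathematica, as the section announces), which is presumably how the authors proceed: write out $L_1,L_2$ explicitly in terms of $(s_1,t_1)$ and $(s_2,t_2,c)$, impose $\phi_q(s_i,t_i)=0$, and verify that the solution set of the four gluing equations is a smooth curve near $[\rho]$ with tangent direction spanned by $\partial_a$. The remaining step — confirming $\dim B^1 = 3$ so that the tangent space to $X$ has the same dimension as this parameter curve — is immediate from irreducibility of $\rho$.
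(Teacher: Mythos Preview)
Your approach is workable in spirit but differs substantially from the paper's, and it carries a notational slip and an unnecessary computational burden.

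First the slip: the Zariski tangent space to $X$ at $[\rho]$ \emph{is} $H^1(\pi_1;\liesl(2,\C)_{\Ad\rho})=Z^1/B^1$, not ``$H^1$ modulo $B^1$''. What you presumably mean is to bound $\dim Z^1\le 4$ and then subtract $\dim B^1=3$ to get $\dim H^1\le 1$. With that correction your outline makes sense, but it commits you to verifying that the Jacobian of the gluing constraints has full rank at $[\rho]$, which (as you note) amounts to checking that the longitude--eigenvalue map along the Riley curve has nonzero derivative there. For the trefoil and figure-eight one can of course grind this out.

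The paper sidesteps all of that. Its argument is: by the proof of Theorem~\ref{thm:FiniteImage}, there are only finitely many quadruples $(s_1,t_1,s_2,t_2)$ compatible with the splice relations, because such quadruples lie over the finite set $\{A_{K_1}=A_{K_2}^T=0\}$. Hence along any one-parameter family $\{[\rho_u]\}$ in $X(\Sigma(K_1,K_2))$ through $[\rho]$, the images $[\rho_{i,u}]\in X(E(K_i))$ are constant. After a global conjugation fixing $\rho_2$, one has $\rho_{1,u}=B\rho_1 B^{-1}$ for some $B\in\SL(2,\C)$; the gluing relations force $B$ to commute with $X_1$ and $L_1$, so by Lemma~\ref{lem:Aform} $B=A_a$ for some $a$, i.e.\ the deformation is exactly a bending. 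No Jacobian check, no local--immersion hypothesis, no cohomology: the global finiteness already proved does all the work.

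So your route trades a one-line appeal to the main theorem for a rank computation that, while feasible for twist knots, is both case-dependent and logically redundant given what the paper has already established.
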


\begin{proof}
Take and fix any $[\rho]=[\rho_1\ast\rho_2]\in X(\Sigma(K_1,K_2))$. 
It is seen that the character variety $X(\Sigma(K_1,K_2))$ has at least one dimension near $[\rho]$ 
by a  bending construction. 

Consider another one parameter family 
\[
\{[\rho_u]\}_u = \{[\rho_{1,u}\ast\rho_{2,u}]\}_u
\subset X(\Sigma(K_1,K_2))
\]
such that $[\rho_u]=[\rho]$. 
Here recall there exist only finitely many quadruples 
$\{(s_1,t_1,s_2,t_2)\text{'s}\}$ 
for this fixed $[\rho]=[\rho_1\ast\rho_2]\in X(\Sigma(K_1,K_2))$ by the proof of the main theorem. 
Then we may assume that 
$[\rho_u]=[\rho_{1,u}\ast\rho_{2}]$ and $[\rho_{1,u}]=[\rho_1]\in X(K_1)$ 
for any $u$. 
Hence one gets $[\rho_u]=[\rho_{1,u}\ast\rho_{2}]=[(B \rho_{1} B^{-1})\ast\rho_{2}]$, where $B\in \SL(2,\C)$. 
Because $B$ must commute with $X_1$ and $L_1$, 
then $B$ has a similar form as $A_a$ in Lemma\nobreakspace \ref {lem:Aform}. 
Therefore this is a bending construction and the dimension of deformations is one. 
\end{proof}

%%%%%%%%%%%%%%%%%%%%%%%%%%%%%%
\subsection{$q=1$ Case}

Here we put $q=1$. This $J(2,2)$ is the trefoil knot. 
We write again
\[
X=
\begin{pmatrix} s & 1 \\
 0 & 1/s  
\end{pmatrix},\ 
Y=
\begin{pmatrix}
 s & 0 \\
 -t & 1/s 
 \end{pmatrix}
\]
and 
\[
\begin{split}
Z&=[Y,X^{-1}]\\
&=\begin{pmatrix}
1-s^2 t &\frac{1}{s}-s (1+t) \\
-\frac{t}{s}+s
t(1+t) & 1+(2-\frac{1}{s^2}) t+t^2
\end{pmatrix},\\
\tilde{Z}
&=[X,Y^{-1}]\\
&=\begin{pmatrix}
1-\left(-2+s^2\right) t+t^2 & \frac{-1+s^2-t}{s}\\
\frac{t\left(1-s^2+t\right)}{s} & 1-\frac{t}{s^2}
\end{pmatrix},\\
ZX-YZ
&=\begin{pmatrix}
0 & -1+1/s^2+s^2-t\\
s(-1+1/s^2+s^2-t)  & 0
\end{pmatrix}.
\end{split}
\]

The condition that $(s,t)$ gives a representation is 
$-1+1/s^2+s^2-t=0$. 
On the other hand, 
\[\begin{split}
\phi_1(s,t)
&=w_{11}+(1/s-s)w_{12} \\
&=1-s^2 t +(1/s-s)(1/s-s(1+t)) \\
&=1-s^2 t +1/s^2-1-t-1+s^2+s^2 t \\
&=-1+1/s^2+s^2-t \\
&=\xi^2-3-t,
\end{split}\]
where $m=s+1/s$. 

Hence in the case of the trefoil knot, 
one sees  
\[
t=\xi^2-3
\]
and $X^\irr(E(J(2,2)))$ is given by 
\[
\{(\xi,t)\in\C^2\mid t=\xi^2-3,\ t\neq 0\}.
\]

If $t=0$, then the corresponding representation is not irreducible. 

\begin{remark}
If $s=1$, that is, $\xi=2$, 
then the chain complex is not acyclic. 
In the other cases, $\tau_\rho(E(J(2,2)))=2$.
\end{remark}

Compute 
\[
\begin{split}
L
&=\tilde{Z}Z\\
&=\begin{pmatrix}
1-t^2+s^4 t^2-t^3+\frac{t (1+t)}{s^2}-s^2 t \left(1+t+t^2\right) 
& \frac{\left(1+s^2\right)
t\left(1+t+s^4 (1+t)-s^2 \left(3+3 t+t^2\right)\right)}{s^3}\\
\frac{t^2
\left(1+s^6-s^2 t-s^4 t\right)}{s^3} & 
1-t^2+\frac{t^2}{s^4}-t^3+s^2 t (1+t)-\frac{t \left(1+t+t^2\right)}{s^2}
\end{pmatrix}.
\end{split}
\]

By putting $t=1-(1/s^2+s^2)$,  
one obtains
\[
L=
\begin{pmatrix}
-s^6 & -\frac{(1+s^2+s^4)(1+s^6)}{s^5}\\
0 & -1/s^6
\end{pmatrix}
\]
and 
\[
\tr(L)=-s^6-1/s^6=-T_6(m).
\]
Here $T_6(x)=x^6-6x^4+9x^2-2$ is the normalized Chebyshev polynomial of degree $6$. 
Remark that $T_6(x)$ has the property $T_6(2\cos\theta)=2\cos 6\theta$.

By relations $x_1=\lambda_2,\ \lambda_1=x_2$, 
one has 
\[
\tr(X_1)=\tr(L_2),\ \tr(L_1)=\tr(X_2).
\]
By putting $\xi_1=s_1+1/s_1,\ \xi_2=s_2+1/s_2$, 
one obtains 
\[
\xi_1=-T_6(\xi_2),\ -T_6(\xi_1)=\xi_2.
\]

Hence we obtain only one equation 
\[
\xi=-T_6(-T_6(\xi))=-T_6(T_6(\xi)).
\]
This equation $\xi+T_6(T_6(\xi))=0$ is a polynomial equation of degree $36$ with distinct 36 roots as follows: 
\[-2=2\cos \pi,\ 2\cos\frac{k \pi}{35}\ (k=1,3,\dots,33),\ 2\cos\frac{k\pi}{37}\ (k=1,3,\dots,35).
\]

It is seen that they are the roots as 
\[
\begin{split}
T_6(T_6(-2))
&=T_6(T_6(2\cos\pi))\\
&=2\cos 36\pi=2,\\
T_6\left(T_6\left(2\cos\frac{k\pi}{35}\right)\right)
&=2\cos \frac{36 k\pi}{35} = -2\cos\frac{k\pi}{35},\\
T_6\left(T_6\left(2\cos\frac{k\pi}{37}\right)\right)
&=2\cos \frac{36 k\pi}{37} = -2\cos\frac{k\pi}{37}.
\end{split}
\]
Further one easily sees that $\xi=-2$ does not give a representation on the splice. 
The roots $2\cos\frac{k \pi}{35}(k=1,3,\dots,33)$ are corresponding to the condition $s_1^{36}=s_1$ 
coming from matrix equations $L_1=X_2$ and $L_2=X_1$.

%Write $\nu_k=2\cos\frac{k \pi}{35}(k=1,3,\cdots,33)$ for simplicities. 
It can be seen that there exists a $k$ such that 
$\tr(\rho(x_1))=2\cos\frac{k \pi}{35}$ 
and $\tr(\rho(x_2))=-T_6(2\cos\frac{k \pi}{35})$ 
for any $[\rho]\in X^\irr(\Sigma(K_1,K_2))$.
On the other hand, the roots $2\cos\frac{k \pi}{37}(k=1,3,\dots,35)$ are corresponding to the condition 
$s_1^{36}=s_1^{-1}$ 
coming from equations $L_1=X_2^{-1}$ and $L_2=X_1$. 
They give representations of the splicing of $3_1$ and its mirror image, not $3_1$. 

Take $[\rho]=[\rho_1\ast \rho_2]\in X^\irr(\Sigma(K_1,K_2))$ 
and identify it with $(X_1,Y_1,X_2,Y_2)$. 
Consider 
\[
A_a=\begin{pmatrix}
a & \frac{a-1/a}{s_1-1/s_1} \\
 0 & 1/a \\
\end{pmatrix},
%B_b=\begin{pmatrix}
%b & \frac{b-1/b}{s_2-1/s_2} \\
 %0 & 1/b\\
%\end{pmatrix},
\]
where $a \in\C^\times$, 
$s_1,s_2 \in\C^\times$ are satisfying 
$s_1+1/s_1=\xi_1, s_2+1/s_2=\xi_2$ and $\xi_1=-T_6(T_6(\xi_1)), \ \xi_2=-T_6(\xi_1)$.
In this case, one gets
\[
\begin{split}
%& \tr\left((A_a\rho_1 A_a^{-1})\ast (B_b\rho_2 B_b^{-1})(x_1 y_2)\right) \\
%=&\tr(X_1 B_b Y_2 B_b^{-1}) \\
%=& s_1 s_2+\frac{1}{s_1 s_2}-\frac{1}{b^2 c^2}(s_2^2+1/s_2^2-1)+\frac{(1-b^2)(s_1-1/s_1)}{b^2(s_2-1/s_2)}(s_2^2+1/s_2^2-1),\\
& \tr\left((A_a\rho_{1}A_a^{-1})\ast (\rho_{2})(y_1 x_2 )\right) \\
=& \tr(X_2 A_a Y_1 A_a^{-1}) \\
=& s_1 s_2+\frac{1}{s_1 s_2}-\frac{c^2}{a^2}(s_1^2+1/s_1^2-1)+\frac{(1-a^2)(s_2-1/s_2)}{(s_1-1/s_1)}(s_1^2+1/s_1^2-1).\\
\end{split}
\]
Here $c$ is determined by 
$X_2=L_1$, 
namely 
\[
c^2=-\frac{(1+s_1^2+s_1^4)(1+s_1^4)}{s_1^5}.
\]

%%%%%%%%%%%%%%%%%%%%%%%%%%%%%%
\subsection{$q=-1$ Case}

We put $q=-1$. 
This $J(2,-2)$ is the figure-eight knot. 
In this case the Riley polynomial $\phi_{-1}(s,t)$ is given by 
\[
\begin{split}
\phi_{-1}(s,t)
&= t^2-(s^2+1/s^2-3)t-s^2-1/s^2+3 \\
&= t^2-(\xi^2-5)t-\xi^2+5,
\end{split}
\]
where $\xi=s+1/s$. 

Then the irreducible representation part of $X^\irr(E(J(2,4)))$ is 
\[
\{(\xi,t)\in\C^2 \mid t^2-(\xi^2-5)t-\xi^2+5=0,\ t\neq 0\}.
\]
Under the same notations, 
one obtains 
\[
\xi_1=\xi_2^4-5\xi_2^2+2,~ \xi_1^4-5\xi_1^2+2=\xi_2.
\]
Hence we obtain only one equation 
\[
\xi=\xi^{16}-20 \xi^{14}+158\xi^{12}-620\xi^{10}+1244\xi^8-1190 \xi^6+487 \xi^4-60 \xi^2-2 
\]
and 16 roots $\nu_0=-2,\nu_i\neq \pm 1~(i=1,\dots,15)$. 
For any $\nu_i\ (i\neq 0)$, we can take a  bending construction 
to do deformations in $X^\irr(\Sigma(J(2,-2),J(2,-2)))$. 
\begin{remark}
In this case, $t$ is a root of $t^2-(\nu_i^2-5)t-\nu_i^2+5=0$.
\end{remark}

%%%%%%%%%%%%%%%%%%%%%%%%%%%%%%%%%%%%%%%%%%%%%%%%%%%%%%%%%%%%%%

%\bibliographystyle{abbrv} %/Users/ynozaki/Dropbox/MyPapers_DropBox/BibTeX/habbrv
%\bibliography{FinImReidemeisterTor} %/Users/ynozaki/Dropbox/MyPapers_DropBox/BibTeX/FinImReidemeisterTor

\end{document}